\documentclass[11pt]{amsart}

\usepackage{amsmath,amssymb,amsthm}
\usepackage{mathtools}
\usepackage[margin=1in]{geometry}
\usepackage{booktabs}
\usepackage{tikz}
\usepackage{xcolor}

\usetikzlibrary{arrows.meta,calc,decorations.pathreplacing,positioning}
\usepackage{hyperref}

\definecolor{polyfill}{RGB}{219,234,247}
\definecolor{boxblue}{RGB}{31,102,163}
\definecolor{boxgreen}{RGB}{34,139,94}
\definecolor{clipred}{RGB}{200,74,45}
\definecolor{ghost}{RGB}{170,170,170}

\theoremstyle{plain}
\newtheorem{theorem}{Theorem}
\newtheorem{lemma}{Lemma}

\newtheorem{proposition}{Proposition}
\newtheorem{conjecture}{Conjecture}
\theoremstyle{definition}
\newtheorem{definition}{Definition}
\newtheorem{remark}{Remark}

\newcommand{\Z}{\mathbb{Z}}
\newcommand{\R}{\mathbb{R}}
\newcommand{\Wf}{W_{\!f}}
\DeclareMathOperator{\Pt}{\mathcal{P}}
\DeclareMathOperator{\supp}{supp}
\newcommand{\Kf}{\mathcal{K}}

\title{Polytopal invariance for Lusztig's $q$-weight multiplicities}
\author{Nicolás Libedinsky}
\dedicatory{In memory of Papa G., with admiration.}
\address{Departamento de Matemática, Universidad de Chile, Santiago, Chile}
\email{nlibedinsky@gmail.com}

\begin{document}
	\begin{abstract}
		
		Let $\Phi$ be an irreducible root system  normalized so that long 		
	roots have length $\sqrt{2}$, and let $\mu\leq\lambda$ be dominant weights. We associate to $(\Phi,\mu,\lambda)$ the convex polytope
		$$
		P^{\Phi}_{\mu,\lambda}
		=
		(\mu+C)\cap(\lambda-C)\cap(\overline{D}-\rho),
		$$
		where $C=\mathbb{R}_{\geq 0}\Phi^+$ and $\overline{D}$ is the closed dominant Weyl chamber. We prove that if $P^{\Phi}_{\mu,\lambda}$ and $P^{\Phi'}_{\mu',\lambda'}$ are isometric, then
		$$
		K^{\Phi}_{\lambda\mu}(q)
		=
		K^{\Phi'}_{\lambda'\mu'}(q),
		$$
		where $K^{\Phi}_{\lambda\mu}(q)$ is Lusztig's $q$-analogue of the $\mu$-weight multiplicity in the irreducible highest-weight representation $V(\lambda)$, often called the generalized Kostka--Foulkes polynomial. Moreover, we construct the polynomial directly from the metric shape of the polytope, and we classify when two such polytopes are isometric. Such coincidences are abundant and can occur in different Dynkin types, and in different Dynkin families. By the Lusztig--Kato theorem, the main theorem gives a metric analogue, in the spherical setting, of the combinatorial invariance conjecture for Kazhdan--Lusztig polynomials.

	\end{abstract}

	\maketitle
	
	\section{Introduction}\label{sec:intro}

Two of the most basic questions in representation theory are: what is the dimension of an irreducible finite-dimensional representation, and what is its character? For complex semisimple Lie algebras, compact connected Lie groups, and connected reductive algebraic groups over $\mathbb C$, the answers are, rather magically, given by the same formulas.

The first general answer was Weyl's character formula \cite{Weyl}, obtained by analytic and topological methods exactly a century ago. Since then, many formulas for characters and weight multiplicities have emerged from algebraic, combinatorial, and geometric methods \cite{Freudenthal,Kostant,Littelmann,Sahi,Schutzer}. Of particular relevance here are Lusztig's $q$-analogues of weight multiplicities. At $q=1$ they reduce to the usual weight multiplicities and, taken over all weights, recover the character.

Write $\Wf$ for the finite Weyl group and $\rho$ for the half-sum of the positive roots, and let
$\Pt^\Phi_q(\nu)$ denote the $q$-analogue of Kostant's partition function, whose coefficient of
$q^k$ counts the ways of writing $\nu$ as a sum of $k$ positive roots of $\Phi$, with repetitions
allowed and order irrelevant. Lusztig's $q$-analogue of weight multiplicities
\cite[\S9]{Lusztig} is
\begin{equation}\label{eq:lusztig}
	K^{\Phi}_{\lambda\mu}(q)
	=
	\sum_{w\in\Wf}
	(-1)^{\ell(w)}
	\Pt^{\Phi}_q\bigl(w(\lambda+\rho)-(\mu+\rho)\bigr).
\end{equation}
 The specialization $K^\Phi_{\lambda\mu}(1)$ is the ordinary weight multiplicity
$$
\dim V(\lambda)_\mu,
$$
namely the coefficient of $e^\mu$ in the character of the irreducible highest weight representation $V(\lambda)$.

The polynomial $K^\Phi_{\lambda\mu}(q)$ contains subtler information invisible under the specialization $q=1$. For dominant $\mu$, it is the Poincaré polynomial of the Brylinski--Kostant filtration on $V(\lambda)_\mu$, induced by a principal nilpotent element \cite{Brylinski,Broer},    and it encodes the local intersection cohomology of a Schubert variety in the affine Grassmannian of the Langlands dual group \cite[§11]{Lusztig}\cite[§§2.4--2.6]{BravermanFinkelberg}.

We can now state the main result of this paper. Throughout the paper, all root systems are assumed to be reduced. Let $\Phi$ be an irreducible root system in a Euclidean space $(V,(,))$, normalized so that long roots have length $\sqrt{2}$. Let $C=\R_{\geq 0}\Phi^+$ be the positive cone and $\overline{D}$ the closed dominant Weyl chamber. For dominant weights $\mu\leq\lambda$, put
$$
P^\Phi_{\mu,\lambda}=
(\mu+C)\cap(\lambda-C)\cap(\overline{D}-\rho).
$$
By an isometry between two polytopes we mean a distance-preserving bijection for their induced
Euclidean metrics; no rescaling is allowed.
\begin{theorem}[Polytopal invariance]\label{thm:main}
	Let $\Phi,\Phi'$ be irreducible, possibly of different Dynkin types, and let $\mu\leq\lambda$ and $\mu'\leq\lambda'$ be dominant weights. If $P^\Phi_{\mu,\lambda}$ and $P^{\Phi'}_{\mu',\lambda'}$ are isometric, then
	$$
	K^\Phi_{\lambda\mu}(q)
	=
	K^{\Phi'}_{\lambda'\mu'}(q).
	$$
\end{theorem}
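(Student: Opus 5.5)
The plan is to show that $K^\Phi_{\lambda\mu}(q)$ can be computed from data intrinsic to the metric polytope $P=P^\Phi_{\mu,\lambda}$: its vertices, faces, and the lengths and angles between its edges. Once the polynomial is expressed through such metric invariants, an isometry $P\to P'$ carries one formula to the other and the theorem follows.

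\textbf{Step 1: reduction to a sum over the polytope.} Rewrite \eqref{eq:lusztig} through $\nu=\mu+\rho$ and $\Lambda=\lambda+\rho$, both strictly dominant. A term $\Pt_q(w\Lambda-\nu)$ is nonzero only if $w\Lambda-\nu\in C$. I would use a standard sign-reversing involution argument (or Brylinski's formula via Hesselink's filtration) to rewrite the alternating sum as a positive expression supported on dominant shifted weights between $\mu$ and $\lambda$. Concretely, I aim for a recursion, or a Möbius-type expression, of the form
\[
K_{\lambda\mu}(q)=\sum_{\gamma} c(\gamma;q),
\]
where $\gamma$ runs over lattice points of $\lambda-\mu$ translates inside $P$ (i.e.\ $\mu\le\gamma\le\lambda$ with $\gamma+\rho$ dominant). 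Here $c(\gamma;q)$ depends only on the set of simple reflections stabilizing the relevant face and on the root-lattice coordinates of $\gamma-\mu$ and $\lambda-\gamma$.

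\textbf{Step 2: metric recovery.} I must show that each ingredient is determined by the isometry class of $P$.
\begin{itemize}
\item The facets of $P$ come in two kinds. The walls of $\overline D-\rho$ are hyperplanes $(x+\rho,\alpha_i^\vee)=0$. The walls of $\mu+C$ and $\lambda-C$ are spanned by all simple roots but one, so they are orthogonal to fundamental weights.
\item The vertex $\mu$ is the vertex where only cone facets of $\mu+C$ meet, and similarly for $\lambda$. The dihedral angles at these vertices reproduce the Cartan matrix restricted to the relevant simple roots.
\item Edges emanating from $\mu$ point along simple roots, and their normalized length ($|\alpha|^2\in\{2,1,2/3\}$) pins down root lengths without rescaling.
\end{itemize}
So the polytope determines the local root subsystem, the lattice, and the positions of $\mu$ and $\lambda$ relative to it. The ambient $\Phi$ itself is not determined; only the part "seen" by $P$ is. The key claim is that the coefficients of $K$ depend only on this seen part. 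This is plausible because the partition-function terms only involve roots $\beta\le\lambda-\mu$ and walls that actually cut $P$.

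\textbf{Step 3: locality.} The crucial point is to prove a locality statement. If a simple root $\alpha_i$ has coefficient zero in $\lambda-\mu$, then the problem factors: we may discard the component and pass to the Levi subsystem generated by $\operatorname{supp}(\lambda-\mu)$. Similarly, if a wall $(x+\rho,\alpha_i^\vee)=0$ does not meet $P$, its contribution cancels. I would prove this via the Levi branching behaviour of Kostka--Foulkes polynomials: $K$ is unchanged under restriction to the Levi of $\supp(\lambda-\mu)$ with weights projected. After this step, both sides become the Kostka--Foulkes polynomial for a pair reconstructed from the metric data of Step 2. Hence $K^\Phi_{\lambda\mu}=K^{\Phi'}_{\lambda'\mu'}$.

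\textbf{Main obstacle.} I expect the hardest step to be Step 2 combined with Step 3: showing that the isometry class of $P$ recovers the Levi datum (the Dynkin subdiagram on $\supp(\lambda-\mu)$ together with the pairings $(\mu+\rho,\alpha_i^\vee)$ that matter) up to diagram isomorphism. Degenerate cases are delicate, for instance when $P$ is lower dimensional, or when a wall of $\overline D-\rho$ coincides with a cone facet, so facet types cannot be distinguished. There I would first try classifying by whether $\mu+\rho$ meets walls of $\overline{D}$ within the affine span of $P$, and handle such coincidences by checking that the formula from Step 1 is symmetric under the ambiguity.
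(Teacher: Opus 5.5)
Your proposal has a genuine gap on the algebraic side, and Step~2 is missing the tool that makes the geometric recovery work.

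\emph{Algebraic gap.} Write $\beta=\lambda-\mu=\sum_i m_i\alpha_i$, $I=\supp(\beta)$, $\hat\mu=\mu+\rho$, $\hat\lambda=\lambda+\rho$, $\hat\lambda_i=\langle\hat\lambda,\alpha_i^\vee\rangle$, and $\delta_i=2m_i-\hat\lambda_i$. This is the depth by which the $i$-th shifted wall cuts the parallelepiped $\Pi=(\mu+C)\cap(\lambda-C)$. The shape of $P$ cannot see $\hat\mu_i$ for $i\in I$ with $\delta_i\le 0$: raising such a $\hat\mu_i$ with $\beta$ fixed only translates $P$. Your Levi reduction in Step~3 removes only the nodes outside $I$. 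So $K^{\Phi_I}_{\lambda_I\mu_I}$ still depends a priori on these invisible numbers, and ``its contribution cancels'' is exactly the statement that needs proof. Step~1 cannot supply it. No cancellation-free formula for $K^\Phi_{\lambda\mu}$ is known in general type, and you never specify $c(\gamma;q)$, let alone show it is metric.

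The missing idea is a direct vanishing lemma inside Lusztig's sum. Take a reduced word $w=s_{j_1}\cdots s_{j_r}$ and set $u_k=s_{j_1}\cdots s_{j_{k-1}}$. Then $\hat\lambda-w\hat\lambda=\sum_k\hat\lambda_{j_k}u_k(\alpha_{j_k})$, and every $u_k(\alpha_{j_k})$ is a positive root. If $s_i$ occurs in the word, pair with $\varpi_i^\vee$ at its first occurrence. This gives $\langle\hat\lambda-w\hat\lambda,\varpi_i^\vee\rangle\ge\hat\lambda_i$, hence $\langle w\hat\lambda-\hat\mu,\varpi_i^\vee\rangle\le m_i-\hat\lambda_i=\delta_i-m_i$. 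So the term vanishes unless $\delta_i\ge m_i$. Lusztig's sum therefore collapses to $W_J$ with $J=\{i:\delta_i\ge m_i\}$, which lies inside the set of walls that genuinely cut $\Pi$. Each surviving argument $w\hat\lambda-\hat\mu=\beta-\sum_k\hat\lambda_{j_k}u_k(\alpha_{j_k})$ involves only $\beta$ and $\hat\lambda_j=2m_j-\delta_j$ for $j\in J$. Only with this lemma does $K$ depend on the ``seen part'' $(\Phi_I,M,\delta|_{\mathrm{Cut}})$.

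\emph{Geometric gap.} You flag lower-dimensional $P$ and wall/cone facet coincidences as obstacles, but neither occurs, and the criterion that settles the recovery is absent.
\begin{itemize}
\item $P$ is always full-dimensional in $\mu+\operatorname{span}\{\alpha_i:i\in I\}$.
\item Its parallelepiped facets come in antipodal pairs, with outer normals $\pm\tilde\varpi_i$, where $\tilde\varpi_i$ are the fundamental weights of $\Phi_I$.
\item The wall facets, one for each $i$ with $\delta_i\ge 1$, have outer normal $-\alpha_i$ and no antipodal partner.
\item No coincidence is possible: $\alpha_i\parallel\tilde\varpi_j$ forces $j=i$ to be an isolated node of $I$, and then $\delta_i=-\hat\mu_i<0$, so that wall does not cut.
\end{itemize}
Intersecting the half-spaces of the paired facets rebuilds $\Pi$. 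The widths must be read from edges of $\Pi$, not of $P$: in Figure~\ref{fig:reconstruction} the edge of $P$ leaving $\mu$ along $\alpha_1$ has length $\|\alpha_1\|$, although $m_1=2$. The depths are recovered as $\delta_i=2d_i/\|\alpha_i\|$, where $d_i$ is the distance from the wall hyperplane to the parallel supporting hyperplane of $\Pi$ beyond it.

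Finally, $\mu$ is not metrically distinguished. On a component of $I$ containing no cutting wall, an isometry may reverse all the simple roots of that component; if no wall cuts at all, $P=\Pi$ is centrally symmetric. So instead of ``identifying $\mu$'', you must show this reversal is harmless. One way is to compose with the self-isometry $a_i\mapsto m_i-a_i$ on that component, where $v=\mu+\sum_i a_i\alpha_i$.
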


The main inspiration for this result is the Combinatorial invariance conjecture for Kazhdan--Lusztig polynomials. Dyer gave the first explicit published formulation \cite{Dyer}. The conjecture is also attributed to Lusztig \cite{Brenti}. This conjecture predicts that a Kazhdan--Lusztig polynomial $h_{x,y}$ depends only on the isomorphism type of the Bruhat interval $[x,y]$. Two natural dreams are:
\begin{enumerate}
	\item[(D1)] to define Kazhdan--Lusztig polynomials directly from the poset structure of $[x,y]$;
	\item[(D2)] to classify Bruhat intervals up to poset isomorphism.
\end{enumerate}
Both problems remain wide open, even in small affine types. In the work \cite{BLV} that directly inspired the present paper, for instance, we obtain only a partial classification of Bruhat intervals in $\widetilde A_2$. For a recent survey and subsequent progress on the conjecture, see \cite{BrentiOpen,BarkleyGaetzLam,CaselliMarietti}. A proof of the conjecture in full generality has very recently been announced in \cite{OpenAICIC}.

In the present paper we replace the isomorphism type of $[x,y]$ by the metric type of $P^\Phi_{\mu,\lambda}$, and ordinary Kazhdan--Lusztig polynomials by spherical Kazhdan--Lusztig polynomials. The connection is provided by the Lusztig--Kato theorem \cite{Lusztig,Kato}. By the Lusztig--Kato theorem, these are, after a change of normalization,
ordinary Kazhdan--Lusztig polynomials for particular pairs of elements
of an affine Weyl group, see \cite[\S2.3]{LPP}. In this metric spherical setting, both dreams admit complete answers.

For (D1), we associate a polynomial $K_P(q)$ directly to the Euclidean shape of every polytope $P$ arising as some $P^\Phi_{\mu,\lambda}$. The construction uses the polytope alone (see Theorem~\ref{prop:KPshape}), and whenever $P$ is isometric to $P^\Phi_{\mu,\lambda}$ one has
$$
K_P(q)=K^\Phi_{\lambda\mu}(q).
$$
Thus a polynomial defined a priori from root-system data can be recovered from the raw Euclidean shape of the polytope alone. The shape does not determine the ambient root system, or even its rank, yet it determines the generalized Kostka--Foulkes polynomial.

For (D2), we call the data $(\Phi,\mu,\lambda)$ producing a given shape a \emph{realization} of it. In Theorem~\ref{cor:class} we give a complete classification of when two realized polytopes are isometric. Roughly speaking, an isometry can arise from a translation, a permutation of simple-root directions, a simultaneous reversal of these directions in the cases where this symmetry is available, or the identification of one parabolic subsystem with another. These phenomena can occur in combination. Moreover, the two realizations may have different Dynkin types, so that the polytopes can live in entirely different Euclidean spaces.

Here is one such coincidence across Dynkin families.\footnote{Here $\varpi_i$ denotes the $i$-th fundamental weight, $\langle\varpi_i,\alpha_j^\vee\rangle=\delta_{ij}$, and simple roots are numbered as in Bourbaki \cite[Planches]{Bourbaki}.} In $D_5$, take
$$
\mu=\varpi_3+\varpi_5,
\qquad
\lambda=\varpi_1+\varpi_2+\varpi_3+\varpi_4,
$$
while in $E_6$ take
$$
\mu'=\varpi_4+\varpi_5+\varpi_6,
\qquad
\lambda'=\varpi_1+\varpi_2+\varpi_3+\varpi_4.
$$
The two polytopes are five-dimensional, both have $68$ vertices, and they are isometric. We have
$$
K^{D_5}_{\lambda\mu}(q)=K^{E_6}_{\lambda'\mu'}(q)
=
q^{2}+5q^{3}+13q^{4}+19q^{5}+20q^{6}+18q^{7}+12q^{8}+6q^{9}+3q^{10}+q^{11}.
$$
This equality is possibly more impressive when viewed from the point of view of Lusztig's formula \eqref{eq:lusztig}: the two expressions run over Weyl groups of orders $1{,}920$ and $51{,}840$, and use root systems with $20$ and $36$ positive roots, respectively.

These coincidences are quite common. 
 An exhaustive computation over bounded
sets of dominant pairs in types $A_1$--$A_5$, $B_2$--$B_4$, $C_3$, $C_4$,
$D_4$, $D_5$, $E_6$, $F_4$ and $G_2$\footnote{The fundamental-weight
	coordinates were bounded by $12$ in ranks one and two, $8$ in $A_3$,
	$6$ in $B_3$ and $C_3$, $3$ in rank four, and $1$ in $A_5$, $D_5$
	and $E_6$.}
produced $212{,}441$ nonempty polytopes in $113{,}620$ isometry classes.
Of these, $18{,}630$ contain more than one realization, $1{,}517$ are realized in more than one Dynkin type, and $1{,}010$ bridge two different Dynkin families. The code and an independent verification of the
polynomial identities are available at \cite{Code}.

 The proof explains why these coincidences are so abundant: it isolates exactly what part of the root-system data is visible to Lusztig's formula, and shows that this is precisely what the metric shape remembers. On the algebraic side, we show that the polynomial depends only on a parabolic subsystem, together with a small amount of numerical data. On the geometric side, we prove that precisely this data can be reconstructed from the Euclidean shape of the polytope itself.  Combining the two gives Theorem \ref{thm:main}.

Through the Lusztig--Kato theorem, Theorem~\ref{thm:main} is a statement about Kazhdan--Lusztig polynomials $h_{x,y}$ for particular pairs $x\le y$ in affine Weyl groups. In Section~\ref{sec:hecke} we conjecture that for these pairs the isometry class of the polytope determines not only $h_{x,y}$ but the isomorphism class of the Bruhat interval $[x,y]$, even across different Dynkin types. This raises a wider question: how much of affine Kazhdan--Lusztig theory can be understood with metric geometry?

\subsection*{Acknowledgments}
The polytope $P^{\Phi}_{\mu,\lambda}$ and several ideas underlying this paper first emerged in $\widetilde{A}_2$ in joint work \cite{BLV} with Rodrigo Villegas and Gastón Burrull, whom I am very happy to thank. I also benefited greatly from discussions with Damián de la Fuente, David Plaza, and Federico Castillo on the interplay between alcovic and Euclidean geometry \cite{BOAT}. David's comments, in particular, led me to substantially reformulate the paper. 
 
 \subsection*{Use of artificial intelligence}
 
 The author used Claude Fable 5 (Anthropic) in the course of this work. The author formulated the statement of Theorem 1 as a conjecture, and large-scale computations by Fable confirmed it in all Dynkin types of small rank that were tested (see the census above). Once the statement was supported by this evidence, the author identified the four mechanisms producing isometries between the polytopes (two of which appear in his previous work \cite{BLV}), and expected the proof to proceed by reducing Lusztig's formula to a parabolic subsystem and recovering that subsystem from the shape of the polytope, using the facet normals with and without antipodal partner. Given little more than the statement, Fable independently produced a three-page argument proving it along essentially these lines. The author subsequently rewrote and expanded that argument into the present paper of more than twenty pages, correcting errors, supplying details, and developing its current conceptual approach: constructing the polynomial directly from the polytope and classifying the relevant isometries completely.  Fable also wrote most of the code available at \cite{Code}. The author is solely responsible for the content of the final manuscript.


\section{Recovering the polynomial from the shape}\label{sec:formula}

In this section $\Phi$ is allowed to be reducible and is reduced (as throughout) and is assumed to span its ambient Euclidean space. Each irreducible component is normalized so that its long roots have length $\sqrt2$ (this normalization plays no role in this section: only coroot pairings enter), and $K^\Phi_{\lambda\mu}(q)$ is defined by \eqref{eq:lusztig}.

Fix a base $\Delta=\{\alpha_1,\dots,\alpha_n\}$ of $\Phi$, where $n$ is the rank of $\Phi$,
and write $\Phi^+$ for the corresponding set of positive roots and $(\,,\,)$ for the Euclidean
inner product on $V$. For $1\le i\le n$, define the simple coroot by
$\alpha_i^\vee:=2\alpha_i/(\alpha_i,\alpha_i)$ and set
$D_i:=(\alpha_i,\alpha_i)/2\in\{1,\tfrac12,\tfrac13\}$, so that
$\alpha_i^\vee=\alpha_i/D_i$. We define
$\langle v,\alpha_i^\vee\rangle:=2(v,\alpha_i)/(\alpha_i,\alpha_i)
=(v,\alpha_i)/D_i$. Thus $\langle\,\cdot\,,\cdot\,\rangle$ is not a new pairing: it is the inner product $(\,\cdot\,,\cdot\,)$ itself, with the second argument a coroot or coweight regarded as a vector of $V$. We keep the angle-bracket notation for pairings whose values are integers independent of the normalization, and reserve $(\,\cdot\,,\cdot\,)$ for the metric quantities (lengths, distances, Gram matrices) that the isometry hypothesis controls.

For any root $\gamma$, let
$$
s_\gamma(v):=v-\frac{2(v,\gamma)}{(\gamma,\gamma)}\,\gamma
$$
be the corresponding reflection. In particular, for the simple root $\alpha_i$ we write
$s_i:=s_{\alpha_i}$, so
$$
s_i(v)=v-\langle v,\alpha_i^\vee\rangle\alpha_i\text{,}
$$
and we set $S:=\{s_1,\dots,s_n\}$. These reflections generate $\Wf$. For $w\in\Wf$, write $\ell(w)$ for its length with respect to $S$.

Fix $\Phi$ and dominant weights $\mu\le\lambda$, where $\le$ denotes the dominance order:
$\lambda-\mu\in\Z_{\ge0}\Delta$. Write $\beta=\lambda-\mu=\sum_k m_k\alpha_k$, so that $M=(m_1,\dots,m_n)\in\Z_{\ge0}^n$. The numbers $m_i$ will be called the \emph{widths}. Write $\hat\lambda=\lambda+\rho$ and $\hat\mu=\mu+\rho$, so that $\beta=\hat\lambda-\hat\mu$. For any weight $\eta$, write
$\eta_i:=\langle\eta,\alpha_i^\vee\rangle$
for its $i$-th fundamental-weight coordinate. One has $\hat\lambda_i-\hat\mu_i=\langle\beta,\alpha_i^\vee\rangle$. In this notation
\begin{equation}\label{lambdamu}
 \hat\mu_i=\langle\mu,\alpha_i^\vee\rangle+1\ge1,\qquad
\hat\lambda_i=\langle\lambda,\alpha_i^\vee\rangle+1\ge1,
\end{equation}  
both identities holding because the half-sum of the positive roots of a root system pairs to $1$
with each of its simple coroots \cite[Ch.~VI, \S1.10]{Bourbaki}. By \eqref{lambdamu}, the weights $\hat\mu$ and $\hat\lambda$ are regular dominant. Define the \emph{parallelepiped}
\[
  \Pi \;=\; \Pi^{\Phi}_{\mu,\lambda}\;\coloneqq\; \mu + \sum_{k=1}^{n} \,[0,m_k]\,\alpha_k.
\]
The parallelepiped can also be written as $\Pi=(\mu+C)\cap(\lambda-C)$: if one writes
$v=\mu+\sum_k a_k\alpha_k$, one has that $v\in\mu+C$ means that every $a_k\ge0$, while $v\in\lambda-C$
means that every $m_k-a_k\ge0$, so $0\le a_k\le m_k$.

It follows  that $\Pi$ contains $P\coloneqq P^{\Phi}_{\mu,\lambda}$, and
the latter polytope is the parallelepiped cut by the shifted walls:
\[P=\Pi\cap\bigcap_i\{v:\langle v,\alpha_i^\vee\rangle\ge -1\}.\]

We will frequently write $v=\mu+\sum_k a_k\alpha_k$ for $v\in\Pi$, and occasionally use
$a_k(v)$ to emphasize the dependence on $v$. For each simple index $i$, define the signed
\emph{wall depth}
$$
\delta_i:=-1-\min_{v\in\Pi}\langle v,\alpha_i^\vee\rangle,
$$
and write $\delta:=(\delta_1,\dots,\delta_n)$.
On $\Pi$ one has
$\langle v,\alpha_i^\vee\rangle=(\hat\mu_i-1)+\sum_k a_k\langle\alpha_k,\alpha_i^\vee\rangle$.
Since $\langle\alpha_i,\alpha_i^\vee\rangle=2>0$ and
$\langle\alpha_k,\alpha_i^\vee\rangle\le0$ for $k\ne i$, the minimum is attained at
$a_i=0$ and $a_k=m_k$ for $k\ne i$. Thus
\begin{equation}\label{deltai}
\delta_i=2m_i-\langle\beta,\alpha_i^\vee\rangle-\hat\mu_i=2m_i-\hat\lambda_i.
\end{equation}
 The wall meets $\Pi$ when
$\delta_i\ge0$, but cuts out a corner only when $\delta_i>0$: at $\delta_i=0$ it merely touches
$\Pi$ and removes nothing. When $\delta_i>0$, the removed corner has depth $\delta_i$, as measured
by the coroot pairing $\langle\,\cdot\,,\alpha_i^\vee\rangle$.

\begin{figure}[ht]
	\centering
	\begin{tikzpicture}[scale=1.2,>={Stealth[length=6pt]},line join=round]
		\coordinate (O)   at (0,0);
		\coordinate (P1)  at (2.8284,0);
		\coordinate (P12) at (1.4142,2.4495);
		\coordinate (P2)  at (-1.4142,2.4495);
		\coordinate (h2) at (1.4142,0);
		\coordinate (h3) at (2.4749,0.6124);
		\coordinate (h5) at (-0.7071,2.4495);
		\coordinate (h6) at (-0.7071,1.2247);
		\fill[ghost!22] (h2)--(P1)--(h3)--cycle;
		\fill[ghost!22] (h5)--(P2)--(h6)--cycle;
		\draw[ghost,dashed] (O)--(P1)--(P12)--(P2)--cycle;
		\fill[ghost] (P1) circle (1.1pt);
		\fill[ghost] (P2) circle (1.1pt);
		\node[ghost!150] at (1.62,1.72) {$\Pi$};
		\fill[polyfill] (O)--(h2)--(h3)--(P12)--(h5)--(h6)--cycle;
		\draw[boxblue,very thick] (O)--(h2)--(h3)--(P12)--(h5)--(h6)--cycle;
		\draw[clipred,very thick] (h2)--(h3);
		\draw[clipred,very thick] (h5)--(h6);
		\draw[->,thin] (P1)--(h3);
		\node[right=1pt,font=\small] at ($(P1)!0.5!(h3)$) {$\delta_2\tfrac{\alpha_2}{2}$};
		\draw[->,thin] (P2)--(h5);
		\node[above,font=\small] at ($(P2)!0.5!(h5)$) {$\delta_1\tfrac{\alpha_1}{2}$};
		\draw[decorate,decoration={brace,amplitude=4pt,mirror},ghost!160]
		($(O)+(0,-0.12)$)--($(P1)+(0,-0.12)$);
		\node[below,font=\small] at (1.4142,-0.34) {$m_1\,\alpha_1$};
		\draw[decorate,decoration={brace,amplitude=4pt},ghost!160]
		($(O)+(-0.10,0.05)$)--($(P2)+(-0.10,0.05)$);
		\node[left,font=\small] at (-1.15,1.5) {$m_2\,\alpha_2$};
		\draw[->,boxblue,very thick] (O)--(h2);
		\node[boxblue,font=\small] at (0.75,0.28) {$\alpha_1$};
		\draw[->,boxgreen,very thick] (O)--(h6);
		\node[boxgreen,font=\small] at (-0.12,0.78) {$\alpha_2$};
		\fill[black] (O) circle (1.6pt);
		\node[below left=0pt] at (O) {$\mu$};
		\node[above] at (P12) {$\lambda$};
		\node[clipred,font=\small] at (3.55,1.15) {$\langle v,\alpha_2^\vee\rangle=-1$};
		\draw[clipred,thin] (3.05,0.95)--($(h2)!0.55!(h3)$);
		\draw[clipred,thin] (h6)--(-0.7071,0.35);
		\node[clipred,font=\small,left] at (-0.7071,0.42) {$\langle v,\alpha_1^\vee\rangle=-1$};
	\end{tikzpicture}
	\caption{The polytope $P$ (light blue) is the parallelepiped $\Pi$ 
		cut by the shifted chamber walls $\langle v,\alpha_i^\vee\rangle=-1$ (red). The edges of $\Pi$ leave the
		base vertex $\mu$ in the simple-root directions $\alpha_1,\alpha_2$. The edge vectors are $m_i\alpha_i$. The wall depth $\delta_i$ measures how far
		the $i$-th wall cuts into $\Pi$: it cuts off the vector $\delta_i\tfrac{\alpha_i}{2}$ from the corresponding edge of $\Pi$, as marked. Here $\Phi=A_2$, $\mu=0$,
		$m_1=m_2=2$ and $\delta_1=\delta_2=1$.}
	\label{fig:reconstruction}
\end{figure}

Define the set of \emph{active walls}
\[
  J\coloneqq\{\,i:\delta_i\ge m_i\,\},
\]
and note that a wall may cut $\Pi$ ($\delta_i>0$) without being active (Figure~\ref{fig:active}).
Call the wall $i$ \emph{cutting} if $\delta_i\ge1$, let $\mathrm{Cut}\coloneqq\{i:\delta_i\ge1\}$ be
the set of cutting walls, and let $I\coloneqq\supp(M)=\{i:m_i>0\}$ be the \emph{support} of $M$. The
three sets are nested.

\begin{lemma}\label{lem:JCutI}
\[
\underbrace{\{\,i:\delta_i\ge m_i\,\}}_{J\ \text{(active)}}
\;\subseteq\;
\underbrace{\{\,i:\delta_i\ge 1\,\}}_{\mathrm{Cut}\ \text{(cutting)}}
\;\subseteq\;
\underbrace{\{\,i:m_i> 0\,\}}_{I\ \text{(support)}} .
\]
\end{lemma}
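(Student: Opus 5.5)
The plan is to argue both inclusions directly from the closed formula \eqref{deltai}, $\delta_i=2m_i-\hat\lambda_i$, together with the bound $\hat\lambda_i\ge1$ from \eqref{lambdamu}. The one computation that does all the work is what happens for an index \emph{outside} the support. If $m_i=0$, then \eqref{deltai} gives
\[
\delta_i=-\hat\lambda_i\le -1 .
\]
Geometrically, a wall in a direction where $\Pi$ has zero width lies strictly outside $\Pi$.

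For the second inclusion $\mathrm{Cut}\subseteq I$, I argue by contraposition. If $i\notin I$, then $m_i=0$, so the computation above gives $\delta_i\le-1<1$. Hence $i\notin\mathrm{Cut}$.

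For the first inclusion $J\subseteq\mathrm{Cut}$, let $i\in J$, so $\delta_i\ge m_i$. I first show that $m_i>0$. If instead $m_i=0$, the same computation would give $\delta_i\le-1<0=m_i$, contradicting $i\in J$. So $m_i\ge1$, because the widths are nonnegative integers. It follows that $\delta_i\ge m_i\ge1$, that is, $i\in\mathrm{Cut}$.

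I expect no real obstacle here. The only point needing care is that for $m_i=0$ the defining inequality of $J$ is not automatically satisfied: one must use $\hat\lambda_i\ge1$, i.e.\ the regularity of $\hat\lambda$, to exclude it. Integrality of $m_i$ is then what upgrades $m_i>0$ to $m_i\ge1$.
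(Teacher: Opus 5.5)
Your proposal is correct and follows essentially the same argument as the paper. You compute $\delta_i=-\hat\lambda_i\le-1$ for $i\notin I$ using \eqref{deltai} and \eqref{lambdamu}, which excludes such $i$ from both $J$ and $\mathrm{Cut}$, and then use integrality to get $\delta_i\ge m_i\ge1$ for $i\in J$.
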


\begin{proof}
If $i\notin I$ then $m_i=0$, so by \eqref{deltai} $\delta_i=-\hat\lambda_i\le-1$ by \eqref{lambdamu}; such an $i$ is neither
cutting nor active, which gives both $\mathrm{Cut}\subseteq I$ and $J\subseteq I$. If now $i\in J$
then $i\in I$ by what we have just seen, so $m_i\ge1$ and therefore $\delta_i\ge m_i\ge1$, thus the wall
$i$ is cutting.
\end{proof}

The following elementary remark will be used repeatedly below.
\begin{remark}\label{elrem}
	 For $\nu=\sum_k b_k\alpha_k$ with
	$b_k\in\Z$, one has $\Pt^{\Phi}_q(\nu)\ne0$ if and only if every $b_k\ge0$. Indeed positive roots
	have nonnegative coordinates in the base, and conversely the simple roots are themselves positive
	roots.
\end{remark}

The threshold $\delta_i\ge m_i$ has both a geometric and an algebraic meaning. Geometrically,
let $E_i$ be the edge of $\Pi$ starting at $\lambda$ in the direction $-\alpha_i$. Its other
endpoint is the vertex $v_i^{\min}$, where $a_i=0$ and $a_k=m_k$ for $k\ne i$, and
$\langle\,\cdot\,,\alpha_i^\vee\rangle$ attains its minimum on $\Pi$. Along $E_i$ we may write $v=v_i^{\min}+a_i(v)\alpha_i$. Since
$\langle v_i^{\min},\alpha_i^\vee\rangle=-1-\delta_i$, we have
$$
\langle v,\alpha_i^\vee\rangle=-1-\delta_i+2a_i(v).
$$
Thus the wall $\langle v,\alpha_i^\vee\rangle=-1$ meets $E_i$ at
$a_i(v)=\delta_i/2$, while the midpoint has $a_i(v)=m_i/2$. Then $i$ is active precisely when
the wall cuts at least halfway across $E_i$; an \emph{inert cut}
($0<\delta_i<m_i$) cuts it before the midpoint.

Algebraically, the term of Lusztig's formula corresponding to the simple reflection $s_i$
is nonzero exactly when $i$ is active. Indeed, since
$s_i\hat\lambda=\hat\lambda-\hat\lambda_i\alpha_i$, by \eqref{deltai} we have
$$
s_i\hat\lambda-\hat\mu
=\beta-\hat\lambda_i\alpha_i
=(\delta_i-m_i)\alpha_i+\sum_{k\ne i}m_k\alpha_k.
$$
All coefficients except possibly that of $\alpha_i$ are nonnegative, and the coefficient of
$\alpha_i$ is $\delta_i-m_i$. By Remark~\ref{elrem},
$\Pt^\Phi_q(s_i\hat\lambda-\hat\mu)\ne0$ if and only if $\delta_i\ge m_i$.
Proposition~\ref{prop:collapse} extends this observation to all the terms of Lusztig's formula.

\begin{figure}[ht]
	\centering
	\begin{tikzpicture}[scale=1.0,>={Stealth[length=5pt]},line join=round]
		\begin{scope}[scale=0.62]
			\coordinate (O)  at (0,0);
			\coordinate (P1) at (2.8284,0);
			\coordinate (P12)at (1.4142,2.4495);
			\coordinate (P2) at (-1.4142,2.4495);
			\coordinate (h2) at (1.4142,0);
			\coordinate (h3) at (2.4749,0.6124);
			\coordinate (h5) at (-0.7071,2.4495);
			\coordinate (h6) at (-0.7071,1.2247);
			\coordinate (mid)at (0,2.4495);
			\fill[ghost!22] (h5)--(P2)--(h6)--cycle;
			\fill[ghost!22] (h2)--(P1)--(h3)--cycle;
			\draw[ghost,dashed] (O)--(P1)--(P12)--(P2)--cycle;
			\fill[polyfill] (O)--(h2)--(h3)--(P12)--(h5)--(h6)--cycle;
			\draw[boxblue,very thick] (O)--(h2)--(h3)--(P12)--(h5)--(h6)--cycle;
			\draw[boxgreen,very thick] (P2)--(P12);
			\draw[clipred,very thick] (h5)--(h6);
			\fill[ghost] (P2) circle (1.4pt);
			\fill[black] (mid) circle (1.7pt);
			\fill[clipred] (h5) circle (1.7pt);
			\node[above,font=\scriptsize] at (mid) {midpoint};
			\node[font=\scriptsize,align=center] at (0,-0.75) {\emph{cutting but inert:}\\[-1pt]$\delta_1<m_1$, cut before midpoint};
		\end{scope}
		\begin{scope}[xshift=5.4cm,scale=0.62]
			\coordinate (aO)  at (0,0);
			\coordinate (aP1) at (2.8284,0);
			\coordinate (aP12)at (0,4.8990);
			\coordinate (aP2) at (-2.8284,4.8990);
			\coordinate (a1) at (1.4142,0);
			\coordinate (a2) at (2.4749,0.6124);
			\coordinate (a4) at (-0.7071,4.8990);
			\coordinate (a5) at (-0.7071,1.2247);
			\coordinate (amid) at (-1.4142,4.8990);
			\fill[ghost!22] (a4)--(aP2)--(a5)--cycle;
			\fill[ghost!22] (a1)--(aP1)--(a2)--cycle;
			\draw[ghost,dashed] (aO)--(aP1)--(aP12)--(aP2)--cycle;
			\fill[polyfill] (aO)--(a1)--(a2)--(aP12)--(a4)--(a5)--cycle;
			\draw[boxblue,very thick] (aO)--(a1)--(a2)--(aP12)--(a4)--(a5)--cycle;
			\draw[boxgreen,very thick] (aP2)--(aP12);
			\draw[clipred,very thick] (a4)--(a5);
			\fill[ghost] (aP2) circle (1.4pt);
			\fill[black] (amid) circle (1.7pt);
			\fill[clipred] (a4) circle (1.7pt);
			\node[above,font=\scriptsize] at (amid) {midpoint};
			\node[font=\scriptsize,align=center] at (-0.7,-0.75) {\emph{active:}\\[-1pt]$\delta_1\ge m_1$, cut at or past midpoint};
		\end{scope}
	\end{tikzpicture}
\caption{Inert versus active cuts in $A_2$. Left: $\delta_1<m_1$, so the wall cuts
	the marked edge before its midpoint. Right: $\delta_1\ge m_1$, so it cuts it at or past the midpoint.}
	\label{fig:active}
\end{figure}

Let $W_J\le\Wf$ be the subgroup generated by $\{s_j:j\in J\}\subseteq S$ (similarly, for a subset $K\subseteq S$ we write $W_K$ for the subgroup generated by $K$), and for $w\in W_J$ put
\[
  v_w\;\coloneqq\;w\hat\lambda-\hat\mu .
\]
 The group $W_J$ is a Coxeter group with simple generators $\{s_j:j\in J\}$, and its length
 function is the restriction of $\ell$ \cite[\S5.5]{HumphreysRGCG}.

Write $\varpi_i^\vee$ for the fundamental coweight dual to $\alpha_i$:
it is determined by $\langle\alpha_j,\varpi_i^\vee\rangle=\delta_{ij}$.
Using the equivalent form
$$
s_j(v)=v-(\alpha_j,v)\alpha_j^\vee
$$
of the reflection formula, we have
$s_j\varpi_i^\vee=\varpi_i^\vee$ for $j\ne i$; hence $\varpi_i^\vee$ is fixed by the parabolic subgroup $W_{S\smallsetminus\{s_i\}}$, and
in fact this parabolic is exactly the stabilizer of $\varpi_i^\vee$ in $\Wf$, by the standard
description of the stabilizer of a point of the closed chamber \cite[Ch.~V, \S3, no.~3, Prop.~2]{Bourbaki}.

\begin{proposition}\label{prop:collapse}
With $J=\{i:\delta_i\ge m_i\}$ the active set,
\[
  K^{\Phi}_{\lambda\mu}(q)=\sum_{w\in W_{J}}(-1)^{\ell(w)}\Pt^{\Phi}_q(w\hat\lambda-\hat\mu).
\]
\end{proposition}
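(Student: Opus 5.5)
Our plan is to show that every term of Lusztig's formula with $w\notin W_J$ vanishes. By Remark~\ref{elrem}, it suffices to exhibit, for each such $w$, a simple index $i$ with
\[
\langle w\hat\lambda-\hat\mu,\varpi_i^\vee\rangle<0,
\]
that is, a negative $\alpha_i$-coordinate of $v_w=w\hat\lambda-\hat\mu$. The natural candidate is an index $i\notin J$ such that $s_i$ occurs in $w$.

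First, we use a parabolic decomposition. Write $w=u\,x$ with $x\in W_{S\smallsetminus\{s_i\}}$. Since $x$ fixes $\varpi_i^\vee$ and $W$ acts by isometries,
\[
\langle w\hat\lambda,\varpi_i^\vee\rangle=\langle x\hat\lambda,x^{-1}u^{-1}\varpi_i^\vee\rangle .
\]
This is not immediately convenient, so we instead organize the argument as follows. Use the decomposition $w=w^{K}w_K$, where $K=S\smallsetminus\{s_i\}$ (or more simply $W_J$-cosets), and apply the standard fact that for dominant regular $\hat\lambda$ one has
\[
\hat\lambda-w\hat\lambda=\sum_{\gamma\in\Phi^+,\,w^{-1}\gamma<0}\langle\hat\lambda,\gamma^\vee\rangle\gamma .
\]
Equivalently, by induction on a reduced word, $\hat\lambda-w\hat\lambda$ is a nonnegative combination of simple roots whose support equals the set of simple reflections appearing in $w$. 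We want a sharper bound: for $i$ appearing in $w$, the $\alpha_i$-coefficient of $\hat\lambda-w\hat\lambda$ is at least $\hat\lambda_i$. This should follow by writing $w=w_1 s_i w_2$ with $s_i\notin w_2$ (the last occurrence) and $\ell$ additive. Then $w_2\hat\lambda=\hat\lambda-(\text{combination of }\alpha_k,\ k\ne i)$, so
\[
\langle w_2\hat\lambda,\alpha_i^\vee\rangle\ge\hat\lambda_i ,
\]
since $\langle\alpha_k,\alpha_i^\vee\rangle\le0$. Applying $s_i$ subtracts at least $\hat\lambda_i\alpha_i$. The subsequent application of $w_1$ only subtracts further nonnegative combinations, because $s_iw_2\hat\lambda$ is $\hat\lambda$ moved down along a reduced path, each step $s_k$ applied to an element $y\hat\lambda$ with $\ell(s_ky)>\ell(y)$ subtracting a positive multiple of $\alpha_k$. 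Hence the $\alpha_i$-coordinate of $v_w=\beta-(\hat\lambda-w\hat\lambda)$ is at most
\[
m_i-\hat\lambda_i=\delta_i-m_i<0
\]
when $i\notin J$, by \eqref{deltai}. This kills the term.

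The main step, and the likely obstacle, is justifying the monotonicity claim. Each left multiplication by $s_k$ that increases length subtracts $\langle y\hat\lambda,\alpha_k^\vee\rangle\alpha_k$ with positive coefficient. This is the standard fact that $\ell(s_ky)>\ell(y)$ iff $y^{-1}\alpha_k>0$, so that
\[
\langle y\hat\lambda,\alpha_k^\vee\rangle=\langle\hat\lambda,(y^{-1}\alpha_k)^\vee\rangle>0
\]
by regular dominance of $\hat\lambda$. With this, the $\alpha_i$-coefficient of $\hat\lambda-w\hat\lambda$ is a sum of nonnegative contributions, one of which is at least $\hat\lambda_i$, and the argument closes.

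Finally, the surviving terms are exactly those with $w\in W_J$, and their signs $(-1)^{\ell(w)}$ agree with the restricted length on $W_J$. This gives the stated formula.
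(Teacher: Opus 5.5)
Your argument is correct, and its skeleton is the paper's. If a reduced word of $w$ contains $s_i$, then the $\alpha_i$-coefficient of $\hat\lambda-w\hat\lambda$ is at least $\hat\lambda_i$. Hence the $\alpha_i$-coordinate of $v_w$ is at most $m_i-\hat\lambda_i=\delta_i-m_i$, which is negative for $i\notin J$, and Remark~\ref{elrem} kills the term. The differences lie in how this key inequality is derived and in how one lands in $W_J$.

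\emph{The key inequality.} The paper expands left to right, $\hat\lambda-w\hat\lambda=\sum_k\hat\lambda_{j_k}\,u_k(\alpha_{j_k})$ as in \eqref{eq:telescope}. It takes the \emph{first} occurrence of $s_i$ and pairs with $\varpi_i^\vee$; since the prefix $u_k$ fixes $\varpi_i^\vee$, that occurrence contributes exactly $\hat\lambda_i$. You expand right to left into simple roots, $\hat\lambda-w\hat\lambda=\sum_k\langle s_{j_{k+1}}\cdots s_{j_r}\hat\lambda,\alpha_{j_k}^\vee\rangle\,\alpha_{j_k}$, with all coefficients positive. You take the \emph{last} occurrence and bound its coefficient by $\langle w_2\hat\lambda,\alpha_i^\vee\rangle\ge\hat\lambda_i$, because $w_2$ only lowers $\hat\lambda$ along $\alpha_k$ with $k\neq i$. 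Both versions rest on the same positivity fact for reduced words.

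\emph{Landing in $W_J$.} The paper shows that every nonvanishing term fixes $\varpi_i^\vee$ for all $i\notin J$, then invokes the stabilizer of $\sum_{i\notin J}\varpi_i^\vee$. Your contrapositive is more direct and needs no stabilizer theorem: a reduced word of $w\notin W_J$ must contain some $s_i$ with $i\notin J$. The paper's left-to-right form has the advantage that the same identity is reused at once to give \eqref{eq:vw}.

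Two things to clean up.
\begin{enumerate}
\item The displayed ``standard fact'' $\hat\lambda-w\hat\lambda=\sum_{\gamma>0,\,w^{-1}\gamma<0}\langle\hat\lambda,\gamma^\vee\rangle\gamma$ is false. In $A_2$ with $w=s_1s_2$, the left side is $\hat\lambda_1\alpha_1+\hat\lambda_2(\alpha_1+\alpha_2)$, whereas the right side puts the coefficient $\hat\lambda_1+\hat\lambda_2$ on $\alpha_1+\alpha_2$. You never actually use it. You only use its qualitative consequence that $\hat\lambda-w_2\hat\lambda$ is a \emph{nonnegative} combination of the $\alpha_k$ with $k\ne i$, which is exactly what the bound $\langle w_2\hat\lambda,\alpha_i^\vee\rangle\ge\hat\lambda_i$ requires. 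Your monotonicity paragraph, applied to $w_2$, proves this independently, so the formula should simply be deleted.
\item The abandoned parabolic-decomposition paragraph should also go. Its identity is misstated; it should read $\langle w\hat\lambda,\varpi_i^\vee\rangle=\langle\hat\lambda,x^{-1}u^{-1}\varpi_i^\vee\rangle$.
\end{enumerate}
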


\begin{proof}

Fix $w\in\Wf$ with reduced expression $s_{j_1}\cdots s_{j_\ell}$, and set
$u_1:=\mathrm{id}$ and $u_k:=s_{j_1}\cdots s_{j_{k-1}}$ for $2\le k\le\ell+1$. 
The telescoping identity
\begin{equation}\label{eq:telescope}
  \hat\lambda-w\hat\lambda=\sum_{k=1}^{\ell}\langle\hat\lambda,\alpha_{j_k}^\vee\rangle\,u_k(\alpha_{j_k})
\end{equation}
is obtained by expanding $\hat\lambda-w\hat\lambda=\sum_{k=1}^{\ell}u_{k}(\hat\lambda-s_{j_k}\hat\lambda)$ and using
$\hat\lambda-s_{j_k}\hat\lambda=\langle\hat\lambda,\alpha_{j_k}^\vee\rangle\alpha_{j_k}$. Each $u_k(\alpha_{j_k})$ is a positive root: since
$u_ks_{j_k}=s_{j_1}\cdots s_{j_k}$ is reduced, this follows from
\cite[Ch.~VI, \S1, no.~6, Cor.~2 to Prop.~17]{Bourbaki}.

We now show that every nonvanishing term of Lusztig's sum fixes  $\varpi_i^\vee$ for all $i\notin J.$

Suppose $w\varpi_i^\vee\ne\varpi_i^\vee$, equivalently
$w\notin$ $W_{S\smallsetminus\{s_i\}}$. Then some letter in the chosen reduced
expression of $w$ is $s_i$. For the first such occurrence, say $j_k=i$, we have
$u_k\in$ $W_{S\smallsetminus\{s_i\}}$, so $u_k$ fixes $\varpi_i^\vee$.
Pairing \eqref{eq:telescope} with $\varpi_i^\vee$, every summand is nonnegative,
since $u_k(\alpha_{j_k})$ is a positive root and positive roots pair
nonnegatively with $\varpi_i^\vee$, while
$\langle\hat\lambda,\alpha_{j_k}^\vee\rangle>0$ by \eqref{lambdamu}. For the first
occurrence of $i$,
\[
\langle u_k(\alpha_i),\varpi_i^\vee\rangle
=\langle u_k(\alpha_i),u_k(\varpi_i^\vee)\rangle
=\langle\alpha_i,\varpi_i^\vee\rangle
=1.
\]
So we have
\[
\langle\hat\lambda-w\hat\lambda,\varpi_i^\vee\rangle
\ge \langle\hat\lambda,\alpha_i^\vee\rangle
=\hat\lambda_i.
\]

Now assume in addition that
$\Pt^\Phi_q(w\hat\lambda-\hat\mu)\ne0$. Then
$w\hat\lambda-\hat\mu\in\Z_{\ge0}\Phi^+$, and so
$\langle w\hat\lambda-\hat\mu,\varpi_i^\vee\rangle\ge0$. Therefore
\[
0
\le \langle w\hat\lambda-\hat\mu,\varpi_i^\vee\rangle
= \langle\hat\lambda-\hat\mu,\varpi_i^\vee\rangle
-\langle\hat\lambda-w\hat\lambda,\varpi_i^\vee\rangle
\le m_i-\hat\lambda_i,
\]
where
$\langle\hat\lambda-\hat\mu,\varpi_i^\vee\rangle
=\langle\beta,\varpi_i^\vee\rangle=m_i$.
This together with \eqref{deltai} gives
$\delta_i=2m_i-\hat\lambda_i\ge m_i$, so $i\in J$. We then conclude that if
$\Pt^\Phi_q(w\hat\lambda-\hat\mu)\ne0$ and $i\notin J,$ then
$w\varpi_i^\vee=\varpi_i^\vee$. Consequently, every nonvanishing term fixes
\[
x:=\sum_{i\notin J}\varpi_i^\vee.
\]
This point lies in the closed dominant chamber and satisfies
$\langle\alpha_j,x\rangle=0$ exactly for $j\in J$, so
$\mathrm{Stab}_{\Wf}(x)=W_J$ by the description of stabilizers recalled above.
Thus every nonvanishing term has $w\in W_J$, 
as claimed.

\end{proof}

To put Proposition~\ref{prop:collapse} in context,  the support of
Kostant's formula is known as the Weyl alternation set \cite{Harris} and is an order ideal in both
the left and right weak Bruhat orders \cite[Theorem~3.1]{Weylalt}.

\begin{proposition}\label{prop:KP}
Let $w\in W_J$, let $s_{j_1}\cdots s_{j_r}$ be a reduced expression for $w$ and put
$u_k=s_{j_1}\cdots s_{j_{k-1}}$ as before. Then
\begin{equation}\label{eq:vw}
  v_w\;=\;\beta-\sum_{k=1}^{r}\hat\lambda_{j_k}\,u_k(\alpha_{j_k}).
\end{equation}
Consequently, Proposition~\ref{prop:collapse} takes the form
\begin{equation}\label{eq:KP}
  K^{\Phi}_{\lambda\mu}(q)\;=\;\Kf_\Phi(M,\delta)(q)\;\coloneqq \;\sum_{w\in W_J}(-1)^{\ell(w)}\,\Pt^{\Phi}_q(v_w).
\end{equation}

In particular, for fixed $\Phi$, the right-hand side of \eqref{eq:KP} depends only on
$(M,\delta)$, and in fact on $\delta$ only through its restriction
$\delta|_{\mathrm{Cut}}$.

\end{proposition}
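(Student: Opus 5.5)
The plan is to obtain \eqref{eq:vw} from the telescoping identity \eqref{eq:telescope}, rewrite Proposition~\ref{prop:collapse} through it, and then read off exactly which parts of $(M,\delta)$ the result depends on.

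\textbf{Step 1: formula \eqref{eq:vw}.} Write
\[
v_w=w\hat\lambda-\hat\mu=(\hat\lambda-\hat\mu)-(\hat\lambda-w\hat\lambda)=\beta-(\hat\lambda-w\hat\lambda).
\]
Apply \eqref{eq:telescope} to the chosen reduced expression of $w$; this is legitimate because a reduced expression in $W_J$ is reduced in $\Wf$, the length of $W_J$ being the restriction of $\ell$. Each coefficient is $\langle\hat\lambda,\alpha_{j_k}^\vee\rangle=\hat\lambda_{j_k}$, which gives \eqref{eq:vw} at once. Substituting into Proposition~\ref{prop:collapse} then yields \eqref{eq:KP}, once we know that the right-hand side is a function of $(M,\delta)$ alone.

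\textbf{Step 2: dependence only on $(M,\delta)$.} Here $\Phi$ is fixed, so $W_J$, the signs $(-1)^{\ell(w)}$, the roots $u_k(\alpha_{j_k})$ and the function $\Pt^\Phi_q$ are all fixed. The remaining inputs are handled as follows.
\begin{itemize}
\item The set $J=\{i:\delta_i\ge m_i\}$ is determined by $(M,\delta)$.
\item The vector $\beta=\sum_k m_k\alpha_k$ is determined by $M$.
\item Only the values $\hat\lambda_{j}$ with $j\in J$ appear in \eqref{eq:vw}. By \eqref{deltai}, $\hat\lambda_j=2m_j-\delta_j$.
\end{itemize}
Hence every $v_w$, and therefore the whole sum, is a function of $(M,\delta)$. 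The sum also involves $\delta$ only through the indices in $J$: through $\delta|_J$ in the $\hat\lambda_j$, and through the membership test that defines $J$.

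\textbf{Step 3: dependence only on $\delta|_{\mathrm{Cut}}$.} By Lemma~\ref{lem:JCutI}, $J\subseteq\mathrm{Cut}$, so $\delta|_J$ is determined by $\delta|_{\mathrm{Cut}}$. It remains to check that $J$ itself is recovered from $M$ and $\delta|_{\mathrm{Cut}}$. Suppose $i\notin\mathrm{Cut}$, i.e.\ $\delta_i\le0$. Two cases occur:
\begin{itemize}
\item if $m_i\ge1$, then $\delta_i<m_i$, so $i\notin J$;
\item if $m_i=0$, then $i\notin I\supseteq J$, so again $i\notin J$.
\end{itemize}
Thus $J=\{i\in\mathrm{Cut}:\delta_i\ge m_i\}$. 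This describes $J$ as a subset of $\mathrm{Cut}$, but it presupposes that we know which indices lie in $\mathrm{Cut}$. The phrase ``restriction $\delta|_{\mathrm{Cut}}$'' should be read as including that domain, i.e.\ the data of the set $\mathrm{Cut}$ together with the values of $\delta$ on it. With that reading $J$ is determined, and the claim follows.

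The only point needing care is this last bookkeeping step, namely confirming that indices outside $\mathrm{Cut}$ can never enter $J$. The rest is direct substitution.
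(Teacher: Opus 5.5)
Your proof is correct and follows the same route as the paper. You derive \eqref{eq:vw} from the telescoping identity \eqref{eq:telescope} via $v_w=\beta-(\hat\lambda-w\hat\lambda)$, and then note that for fixed $\Phi$ the only remaining inputs are $\beta$, the set $J$, and the values $\hat\lambda_j=2m_j-\delta_j$ for $j\in J$; all of these are determined by $M$ and $\delta|_{\mathrm{Cut}}$ because $J\subseteq\mathrm{Cut}$. Your explicit check that $J=\{i\in\mathrm{Cut}:\delta_i\ge m_i\}$, with $\mathrm{Cut}$ read as the domain of the restriction, simply spells out what the paper gets by citing Lemma~\ref{lem:JCutI}.
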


\begin{proof}
Since $\beta=\hat\lambda-\hat\mu$ and $\hat\lambda_{j_k}=\langle\hat\lambda,\alpha_{j_k}^\vee\rangle$, the
telescoping identity \eqref{eq:telescope} gives
\[
  v_w=w\hat\lambda-\hat\mu=\beta-(\hat\lambda-w\hat\lambda)
  =\beta-\sum_{k=1}^{r}\hat\lambda_{j_k}\,u_k(\alpha_{j_k}),
\]
which is \eqref{eq:vw}. Although the right-hand side of \eqref{eq:vw} is written using a reduced expression of $w$,
it is independent of this choice, since it equals the intrinsically defined vector
$v_w=w\hat\lambda-\hat\mu$. Formula \eqref{eq:KP} is now Proposition~\ref{prop:collapse} read through the
definition of $v_w$.

For the last assertion, fix $\Phi$. Then the root system itself determines the simple roots
$\alpha_i$, the Weyl group and its length function, and the partition function $\Pt^\Phi_q$.
We compute \eqref{eq:vw} with any reduced expression of $w$; its letters are all among the $s_j$, $j\in J$, since every reduced expression of an element of $W_J$ only involves the generators $s_j$, $j\in J$ \cite[\S5.5]{HumphreysRGCG}. The remaining data entering the right-hand side of \eqref{eq:KP} are
$\beta=\sum_i m_i\alpha_i$, the set $J=\{i:\delta_i\ge m_i\}$, and the integers
$\hat\lambda_j=2m_j-\delta_j$ for $j\in J$. These are determined by $(M,\delta)$.
Moreover, since $J\subseteq\mathrm{Cut}$, both $J$ and the integers $\hat\lambda_j$ for
$j\in J$ are already determined by $M$ and $\delta|_{\mathrm{Cut}}$.

\end{proof}

\section{The parabolic subsystem \texorpdfstring{$\Phi_I$}{Phi\_I}}\label{sec:parab}

The formula $\Kf_\Phi(M,\delta)$ of Section~\ref{sec:formula} still carries the ambient root system
$\Phi$, through $W_J$ and $\Pt^{\Phi}_q$. Theorem~\ref{thm:parab} below shows that only the
parabolic subsystem $\Phi_I$ spanned by the edges of the parallelepiped is relevant: part~(1)
computes the formula entirely inside $\Phi_I$, while part~(2) gives the transport statement used
in the proof of Theorem~\ref{thm:main}. Since the normalization is not used in Section~\ref{sec:formula}, its results hold for any reduced root system. We apply them to $\Phi_I$ with the metric inherited from $V$ (not renormalized), which is reducible whenever $I$ is
disconnected.\footnote{A different form of independence from the ambient root system appears in
		\cite{LecouveyStab}, where, for fixed partitions $\lambda,\mu$ in the classical families,
		each coefficient of $K^\Phi_{\lambda\mu}(q)$ stabilizes as the ambient rank tends to infinity.}

For a polytope $P$, we call the triple $(\Phi,\mu,\lambda)$ a \emph{realization} of $P$ if $P=P^\Phi_{\mu,\lambda}$, while the \emph{raw shape} of $P$ is $P$ regarded as a metric space with its induced Euclidean metric.
Note that $\mu$ and $\lambda$ always lie in $P$: indeed $\lambda\in\mu+C$ and $\mu\in\lambda-C$
because $\beta\in C$, while $\langle\mu,\alpha_i^\vee\rangle\ge0\ge-1$ and
$\langle\lambda,\alpha_i^\vee\rangle\ge0\ge-1$ for every $i$. Then $P$ is a singleton if and
only if $\mu=\lambda$, if and only if $I=\varnothing$, and in that case $K^{\Phi}_{\lambda\mu}=1$.
Since a polytope isometric to a single point is itself a single point, Theorem~\ref{thm:main} holds
trivially in this case, and we may assume $I\ne\varnothing$ from now on. Let
$I=I_1\sqcup\cdots\sqcup I_r$ be the partition of $I$ into the connected
components of the subdiagram of the Dynkin diagram of $\Phi$ induced by $I$, and let
\[
  U\coloneqq \operatorname{span}\{\alpha_i:i\in I\}=\textstyle\bigoplus_{c}U_c,\qquad \mathrm{with}\ 
  U_c=\operatorname{span}\{\alpha_i: i\in I_c\},
\]     
an orthogonal decomposition. Let $\Phi_I=\Phi\cap
U$ be the corresponding parabolic root subsystem; the set $\{\alpha_i:i\in I\}$ is a base of $\Phi_I$ (i.e.\ a
system of simple roots for it). For $v\in \Pi,$ if one writes $v-\mu=\sum_k a_k\alpha_k$, then $a_k\in[0,m_k]$. In particular,
$a_k=0$ for $k\notin I$, so $v-\mu=\sum_{i\in I}a_i\alpha_i$ and $\Pi-\mu\subseteq U$, thus $P-\mu\subseteq U$. In particular,
\begin{equation}\label{beta}
\beta=\sum_{i\in I}m_i\alpha_i.
\end{equation}
Recall from Lemma~\ref{lem:JCutI} the nested sets $J\subseteq\mathrm{Cut}\subseteq I$.   For each $k\in\{1,\dots,n\}$ and $v\in\mu+U$ written as
$v=\mu+\sum_{i\in I}a_i\alpha_i$, let
\begin{equation}\label{tau}
\tau_k(v):=\langle v,\alpha_k^\vee\rangle+1
=\sum_{i\in I}\langle\alpha_i,\alpha_k^\vee\rangle a_i+\hat\mu_k
\end{equation}

be the \emph{signed margin} of $v$ relative to the $k$-th wall: it is positive on the
dominant side, zero on the wall, and negative beyond it.
Since $\min_{\Pi}\tau_k=-\delta_k$, if $k\notin\mathrm{Cut}$, then
$\tau_k(v)\ge-\delta_k\ge0$ for every $v\in\Pi$. Thus the $k$-th wall inequality
is satisfied throughout $\Pi$ and imposes no additional restriction. Thus
\[
P=\Pi\cap\bigcap_{k\in\mathrm{Cut}}\{\tau_k\ge0\}.
\]
where by $\{\tau_k\ge 0\}$ we mean $\{v\in \mu+U\, \vert\, \tau_k(v)\ge0\}$. For $i\in I$ let $\tilde\varpi_i\in U$ denote the fundamental weights of the subsystem $\Phi_I$: the basis
of $U$ dual to the simple coroots of $\Phi_I$, characterized by
$\langle\tilde\varpi_i,\alpha_j^\vee\rangle=\delta_{ij}$ for all $j\in I$. Two reformulations are used
below. The defining relations read
$(\tilde\varpi_i,\alpha_j)=D_j\,\delta_{ij}$ in terms of the inner product;
and for $v\in\mu+U$, by applying $(-,\tilde\varpi_i)$ one obtains the $i$-th parallelepiped coordinate
\begin{equation}\label{ai}
	a_i(v)=(v-\mu,\tilde\varpi_i)/D_i.
\end{equation}
We use the weights $\tilde\varpi_i$ of $\Phi_I$, rather than the ambient fundamental weights
$\varpi_i$ of $\Phi$, because the latter need not lie in $U$.  The theorem below uses a metrically strict notion of isomorphism between parabolic root
subsystems, which we now define. Recall that $\{\alpha_i:i\in I\}$ is a base of $\Phi_I$.

\begin{definition}\label{def:basediso}
	Let $\Psi$ and $\Psi'$ be root systems with chosen bases $\Delta$ and
	$\Delta'$, spanning Euclidean spaces $E$ and $E'$. A \emph{based isometry}
	$\sigma:(\Psi,\Delta)\to(\Psi',\Delta')$ is a bijection
	$\sigma:\Delta\to\Delta'$ whose linear extension $E\to E'$ is an isometry.
Equivalently, the Gram matrices
\[
G_\Delta:=\big((\alpha,\gamma)\big)_{\alpha,\gamma\in\Delta}
\quad\text{and}\quad
G_{\Delta'}:=\big((\alpha',\gamma')\big)_{\alpha',\gamma'\in\Delta'}
\]
agree under $\sigma$.
\end{definition}

The usual notion of isomorphism of root systems allows each irreducible factor to be rescaled; here
we allow no rescaling. Remark that the inverse of a based isometry is again one.

In the next lemma we write
$\sigma$ also for the linear extension.
Since a based isometry preserves the Cartan integers, the equality
$\sigma(\Psi)=\Psi'$ follows from \cite[\S11.1]{Humphreys}. We include the short proof because the theorem below uses
the explicit correspondences of parts~(1) and~(3), and not simply the existence of an isomorphism.

\begin{lemma}\label{lem:basediso}
	Let $\sigma:(\Psi,\Delta)\to(\Psi',\Delta')$ be a based isometry. Then:
	\begin{enumerate}
		\item  The map $w\mapsto\sigma w\sigma^{-1}$ is an isomorphism from the Weyl group of $\Psi$
		onto that of $\Psi'$, carrying $s_\gamma$ to $s_{\sigma\gamma}$ for $\gamma\in\Delta$ and
		preserving the length function.
		
		\item $\sigma(\Psi)=\Psi'$ and $\sigma(\Psi^+)=(\Psi')^+$.
		
		\item $\Pt^{\Psi'}_q(\sigma\nu)=\Pt^{\Psi}_q(\nu)$ for every $\nu\in E$.
	\end{enumerate}
\end{lemma}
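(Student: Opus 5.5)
The plan is to use the fact that a linear isometry preserves inner products, so it preserves the formula for reflections. Throughout, write $\sigma$ both for the bijection $\Delta\to\Delta'$ and for its linear extension $E\to E'$.

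\emph{Part (1).} For $\gamma\in\Delta$ and $v\in E$, the definition of $s_\gamma$ and $(\sigma v,\sigma\gamma)=(v,\gamma)$, $(\sigma\gamma,\sigma\gamma)=(\gamma,\gamma)$ give
\[
\sigma s_\gamma\sigma^{-1}(v')=v'-\frac{2(v',\sigma\gamma)}{(\sigma\gamma,\sigma\gamma)}\,\sigma\gamma=s_{\sigma\gamma}(v')
\]
for $v'\in E'$. The same computation works for any vector $\gamma$, so $\sigma s_\gamma\sigma^{-1}=s_{\sigma\gamma}$ in general. The Weyl group $W$ of $\Psi$ is generated by $\{s_\gamma:\gamma\in\Delta\}$, and that of $\Psi'$ by $\{s_{\sigma\gamma}:\gamma\in\Delta\}$. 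Conjugation by $\sigma$ is an injective group homomorphism $GL(E)\to GL(E')$, and it maps the first generating set bijectively onto the second. Hence it is an isomorphism $W\to W'$ matching the simple generators. Length with respect to the simple generators is defined purely in terms of the generating set, so it is preserved.

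\emph{Part (2).} Every root of $\Psi$ has the form $w\gamma$ with $w\in W$ and $\gamma\in\Delta$ \cite[\S10.3]{Humphreys}. Then
\[
\sigma(w\gamma)=(\sigma w\sigma^{-1})(\sigma\gamma)
\]
is a $W'$-translate of a simple root of $\Psi'$, so it lies in $\Psi'$. This gives $\sigma(\Psi)\subseteq\Psi'$. Applying the same argument to $\sigma^{-1}$, which is also a based isometry, gives the reverse inclusion. For positivity, a root $\alpha\in\Psi$ is positive exactly when its coordinates in $\Delta$ are nonnegative. The map $\sigma$ carries the $\Delta$-coordinates of $\alpha$ to the $\Delta'$-coordinates of $\sigma\alpha$, reindexed by $\sigma$. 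So $\sigma(\Psi^+)=(\Psi')^+$.

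\emph{Part (3).} By (2), $\sigma$ restricts to a bijection $\Psi^+\to(\Psi')^+$. A multiset of $k$ positive roots of $\Psi$ summing to $\nu$ is therefore sent to a multiset of $k$ positive roots of $\Psi'$ summing to $\sigma\nu$, by linearity of $\sigma$. Since $\sigma$ is injective, this map is a bijection between the two sets of such multisets, with inverse induced by $\sigma^{-1}$. Comparing the coefficient of $q^k$ for each $k$ gives $\Pt^{\Psi'}_q(\sigma\nu)=\Pt^{\Psi}_q(\nu)$.

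The step needing the most care is (2). It relies on the standard fact that every root is $W$-conjugate to a simple root; this holds for reduced root systems, which are assumed throughout. Everything else is formal.
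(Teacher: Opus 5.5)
Your proof is correct and follows the paper's argument essentially step for step. Conjugation gives $\sigma s_\gamma\sigma^{-1}=s_{\sigma\gamma}$, which yields a Weyl group isomorphism matching simple generators and hence preserving lengths. Every root is $W$-conjugate to a simple root, which gives $\sigma(\Psi)=\Psi'$, and nonnegative coordinates handle positivity. Part (3) is the induced bijection on multisets of positive roots. The only cosmetic differences are that you get injectivity from conjugation on $GL(E)$ and treat positivity as an ``iff'', whereas the paper applies each one-sided argument to $\sigma^{-1}$.
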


\begin{proof}
	(1) Since $\sigma$ is an isometry,
	$\sigma s_\gamma\sigma^{-1}=s_{\sigma\gamma}$ for every $\gamma\in\Delta$. Hence, if
	$w=s_{\gamma_1}\cdots s_{\gamma_r}\in W(\Psi)$, then
	$$
	\sigma w\sigma^{-1}=s_{\sigma\gamma_1}\cdots s_{\sigma\gamma_r}\in W(\Psi').
	$$
	Thus conjugation by $\sigma$ gives a homomorphism from $W(\Psi)$ to $W(\Psi')$. Applying the same argument to
	$\sigma^{-1}$ gives an inverse homomorphism, so conjugation by $\sigma$ is an isomorphism.
	
	If the expression for $w$ above is reduced, then so is the corresponding expression for
	$\sigma w\sigma^{-1}$, because otherwise a shorter expression, conjugated by $\sigma^{-1}$, would give a
	shorter expression for $w$. Thus the isomorphism preserves the length function.
	
	(2) Every root of $\Psi$ has the form $w(\gamma)$ with $w\in W(\Psi)$ and $\gamma\in\Delta$. By (1),
	$$
	\sigma(w\gamma)=(\sigma w\sigma^{-1})(\sigma\gamma)\in\Psi',
	$$
	so $\sigma(\Psi)\subseteq\Psi'$. Applying the same argument to
	$\sigma^{-1}:(\Psi',\Delta')\to(\Psi,\Delta)$ gives
	$\sigma^{-1}(\Psi')\subseteq\Psi$, thus  $\Psi'\subseteq\sigma(\Psi)$. We conclude that
	$\sigma(\Psi)=\Psi'$.

	If $\alpha\in\Psi^+$, write
	$
	\alpha=\sum_{\gamma\in\Delta} c_\gamma\gamma
	$
	with $c_\gamma\ge0$. Then
	$
	\sigma(\alpha)=\sum_{\gamma\in\Delta} c_\gamma\,\sigma(\gamma),
	$
	which is a nonnegative combination of the elements of $\Delta'$. Since
	$\sigma(\alpha)\in\Psi'$, it follows that $\sigma(\alpha)\in(\Psi')^+$.
	Applying the same argument to $\sigma^{-1}$ gives
	$\sigma(\Psi^+)=(\Psi')^+$.

	(3) By (2), $\sigma$ carries a way of writing $\nu$ as a sum of $k$ positive roots of $\Psi$
	to a way of writing $\sigma\nu$ as a sum of $k$ positive roots of $\Psi'$. Applying the same
	argument to $\sigma^{-1}$ gives the inverse correspondence, so
	$\Pt_q^{\Psi'}(\sigma\nu)=\Pt_q^\Psi(\nu)$.
\end{proof}

The statement of the theorem below refers to the \emph{restricted pair} of the realization, which
we now introduce: set
\[
\mu_I\;\coloneqq\;\sum_{i\in I}\langle\mu,\alpha_i^\vee\rangle\,\tilde\varpi_i\;\in\;U,
\qquad
\lambda_I\;\coloneqq\;\mu_I+\beta .
\]
For every $i\in I$, by definition of the $\tilde\varpi_i$ one has
$\langle\mu_I,\alpha_i^\vee\rangle=\langle\mu,\alpha_i^\vee\rangle$, and hence
$\langle\lambda_I,\alpha_i^\vee\rangle=\langle\lambda,\alpha_i^\vee\rangle$. This implies that
$\mu_I$ and $\lambda_I$ are dominant weights of $\Phi_I$, and $\mu_I\le\lambda_I$ in the dominance
order of $\Phi_I$ since \[\lambda_I-\mu_I=\beta\in\Z_{\ge0}\{\alpha_i:i\in I\}\] by \eqref{beta}.   Geometrically, $\mu_I$ and $\lambda_I$ are the orthogonal projections of
$\mu$ and $\lambda$ onto $U$: one has $(\mu_I - \mu, \alpha_i) = 0$ for
every $i \in I$, and the $\alpha_i$ with $i \in I$ span $U$. That
nothing is lost in discarding the coordinates at the nodes outside $I$
is the content of part (1) of the theorem below.

\begin{theorem}\label{thm:parab}
Let $P=P^\Phi_{\mu,\lambda}$ be a realized polytope with $I=\supp(M)\ne\varnothing$.
\begin{enumerate}
\item Formula~\eqref{eq:KP} is computed entirely inside $\Phi_I$:
\[
\Kf_\Phi(M,\delta)=K^{\Phi_I}_{\lambda_I\mu_I}.
\]
\item Let $P'=P^{\Phi'}_{\mu',\lambda'}$ be a second realized polytope, with shape data
$(M',\delta')$, support $I'=\supp(M')$ and set of cutting walls $\mathrm{Cut}'$. Suppose that there
is a bijection $\pi:I\to I'$ such that $\sigma:\alpha_i\mapsto\alpha'_{\pi i}$ is a based isometry
$(\Phi_I,\{\alpha_i\}_{i\in I})\to(\Phi'_{I'},\{\alpha'_{i'}\}_{i'\in I'})$ and
\[
m'_{\pi i}=m_i\quad(i\in I),\qquad \pi(\mathrm{Cut})=\mathrm{Cut}',\qquad
\delta'_{\pi i}=\delta_i\quad(i\in\mathrm{Cut}).
\]
Then $\Kf_\Phi(M,\delta)=\Kf_{\Phi'}(M',\delta')$.
\end{enumerate}
\end{theorem}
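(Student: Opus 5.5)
For part~(1), I will compare the two sides of \eqref{eq:KP} term by term. First I apply Proposition~\ref{prop:KP} to the root system $\Phi_I$ (reduced, spanning $U$), with base $\{\alpha_i\}_{i\in I}$ and the pair $\mu_I\le\lambda_I$.

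\emph{Data agree.} In $\Phi_I$ the difference is again $\beta$, so the widths are $m_i$ for $i\in I$. Since $\langle\lambda_I,\alpha_i^\vee\rangle=\langle\lambda,\alpha_i^\vee\rangle$ for $i\in I$, and the half-sum $\rho_I$ of positive roots of $\Phi_I$ also pairs to $1$ with each simple coroot, the shifted coordinates $\widehat{(\lambda_I)}_i=\hat\lambda_i$ coincide. By \eqref{deltai} the wall depths of the restricted realization are $\delta_i=2m_i-\hat\lambda_i$ for $i\in I$. Hence the active set of the restricted realization is $\{i\in I:\delta_i\ge m_i\}$, which equals $J$ because $J\subseteq I$ by Lemma~\ref{lem:JCutI}.

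\emph{Group and vectors agree.} The parabolic subgroup $W_J$ of $\Wf$ is generated by the same reflections $s_j$, $j\in J$, as the corresponding parabolic subgroup of $W(\Phi_I)$. Restricted to $U$, the latter acts faithfully, and the length functions agree, since both are Coxeter length in the generators $\{s_j\}_{j\in J}$. For $w\in W_J$, formula \eqref{eq:vw} expresses $v_w$ as $\beta-\sum_k\hat\lambda_{j_k}u_k(\alpha_{j_k})$. Every ingredient here is identical in the two settings, so $v_w$ is the same vector of $U$.

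\emph{Partition functions agree.} It remains to check that $\Pt^\Phi_q(\nu)=\Pt^{\Phi_I}_q(\nu)$ for $\nu\in U$. A decomposition of $\nu$ into positive roots of $\Phi$ involves only roots with nonnegative simple coordinates. Since the coordinates of $\nu$ outside $I$ vanish, every root used has support in $I$, so it lies in $\Phi\cap U=\Phi_I^+$. The converse inclusion is trivial. Summing over $W_J$ gives $\Kf_\Phi(M,\delta)=\Kf_{\Phi_I}(M|_I,\delta|_I)$, and applying Proposition~\ref{prop:KP} to $\Phi_I$ identifies this with $K^{\Phi_I}_{\lambda_I\mu_I}$.

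For part~(2), by part~(1) it suffices to compare the formulas \eqref{eq:KP} computed inside $\Phi_I$ and $\Phi'_{I'}$, and I transport everything along $\sigma$ using Lemma~\ref{lem:basediso}.

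\emph{Active sets correspond.} We have $J\subseteq\mathrm{Cut}$, and on $\mathrm{Cut}$ both $m$ and $\delta$ are transported by $\pi$. Hence $\pi(J)=\{i'\in\mathrm{Cut}':\delta'_{i'}\ge m'_{i'}\}=J'$, using $J'\subseteq\mathrm{Cut}'$. Likewise $\hat\lambda'_{\pi j}=2m'_{\pi j}-\delta'_{\pi j}=\hat\lambda_j$ for $j\in J$.

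\emph{Groups and vectors correspond.} By Lemma~\ref{lem:basediso}(1), conjugation by $\sigma$ carries $W_J$ onto $W'_{J'}$, preserving length and sending a reduced word $s_{j_1}\cdots s_{j_r}$ to $s'_{\pi j_1}\cdots s'_{\pi j_r}$. Then $\sigma(u_k\alpha_{j_k})=u'_k\alpha'_{\pi j_k}$, and $\sigma\beta=\beta'$ because the widths match. Thus \eqref{eq:vw} gives $\sigma(v_w)=v'_{\sigma w\sigma^{-1}}$.

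\emph{Conclusion.} Lemma~\ref{lem:basediso}(3) gives equality of the partition functions term by term, and hence $\Kf_\Phi(M,\delta)=\Kf_{\Phi'}(M',\delta')$.

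The main obstacle I expect is in part~(1): justifying carefully that the wall depths of the restricted realization agree with the original ones on $I$. This requires confirming that the $\rho$-shift for $\Phi_I$ pairs to $1$ with each simple coroot. It also requires that $\hat\mu_I$, and hence $v_w$ and the active set, are unchanged, even though $\rho$ and $\rho_I$ differ. Once the shifted coordinates are seen to match, the rest is bookkeeping.
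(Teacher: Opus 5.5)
Your proposal is correct and follows the paper's proof essentially step by step. In (1) you match widths, depths and the active set through the $\rho$-shifted coordinates, identify the two copies of $W_J$ by restriction to $U$, compare the vectors $v_w$ via \eqref{eq:vw}, and show that the Kostant partitions in $\Phi$ and $\Phi_I$ coincide; in (2) you transport $J$, the coefficients $\hat\lambda_j$, $\beta$ and $v_w$ along $\sigma$ using Lemma~\ref{lem:basediso}, exactly as the paper does.

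Two small slips to fix when writing it up. First, the roots used in the partitions lie in $\Phi^+\cap U=\Phi_I^+$, not $\Phi\cap U$. Second, $\hat\mu_I$ generally differs from $\hat\mu$ as a vector; only their pairings with $\alpha_i^\vee$, $i\in I$, agree, which is exactly why computing $v_w$ through \eqref{eq:vw} rather than as $w\hat\lambda-\hat\mu$ is the right move.
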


\begin{proof}

(1) Throughout the proof, a superscript $\Phi$ or $\Phi_I$ indicates the
realization in which a quantity is computed. Thus, for instance, we write
$\beta^\Phi$, $m_i^\Phi$, $\hat\mu_i^\Phi$, etc. for the data associated with
$(\Phi,\mu,\lambda)$, and use the corresponding notation with superscript
$\Phi_I$ for $(\Phi_I,\mu_I,\lambda_I)$.

We first compare the active sets. By Lemma~\ref{lem:JCutI},
$J^\Phi\subseteq I$. Since
$\beta^{\Phi_I}=\lambda_I-\mu_I=\beta^\Phi
=\sum_{i\in I}m_i^\Phi\alpha_i$,
we have $m_i^{\Phi_I}=m_i^\Phi$ for every $i\in I$.
Moreover, by \eqref{lambdamu},
$\hat\mu_i^{\Phi_I}=\hat\mu_i^\Phi$ and
$\hat\lambda_i^{\Phi_I}=\hat\lambda_i^\Phi$ for every $i\in I$.
By \eqref{deltai} we have $\delta_i^{\Phi_I}=\delta_i^\Phi$ for every $i\in I$. Therefore
$$
J^{\Phi_I}
=\{i\in I:\delta_i^{\Phi_I}\ge m_i^{\Phi_I}\}
=\{i\in I:\delta_i^\Phi\ge m_i^\Phi\}
=J^\Phi,
$$
where the last equality uses $J^\Phi\subseteq I$. We will write
simply $J$ for this common active set.

Write $W^\Phi$ and $W^{\Phi_I}$ for the Weyl groups of $\Phi$ and
$\Phi_I$, respectively, and $W_J^\Phi$ and $W_J^{\Phi_I}$ for their
standard parabolic subgroups generated by $\{s_j:j\in J\}$.
For $j\in J$, the reflection $s_j$ preserves $U$ and fixes $U^\perp$
pointwise. Its restriction to $U$ is the simple reflection of $\Phi_I$
corresponding to $\alpha_j$. Thus restriction to $U$ gives an isomorphism
$W_J^\Phi\xrightarrow{\sim}W_J^{\Phi_I}$, sending each simple generator
$s_j$ to the corresponding simple generator, hence it preserves the
Coxeter lengths in these two parabolic groups. By
\cite[Ch.~IV, \S1, no.~8, Prop.~7, Cor.~4]{Bourbaki}, these lengths agree
with the ambient lengths in $W^\Phi$ and $W^{\Phi_I}$, respectively.
Therefore, under this identification,
$\ell_{W^\Phi}(w)=\ell_{W^{\Phi_I}}(w)$ for every $w\in W_J^\Phi$.
In particular, the sign $(-1)^{\ell(w)}$ in \eqref{eq:KP} is unchanged
when passing from $\Phi$ to $\Phi_I$.

Fix $w\in W_J^\Phi$, and choose a reduced expression
$w=s_{j_1}\cdots s_{j_r}$, with $j_1,\dots,j_r\in J$. Set
$u_k=s_{j_1}\cdots s_{j_{k-1}}$. Since $u_k\in W_J^\Phi$,
the root $u_k(\alpha_{j_k})$ lies in
$\operatorname{span}\{\alpha_j:j\in J\}\cap\Phi=\Phi_J$.
It is also a positive root of $\Phi$ by the argument following
\eqref{eq:telescope}; hence
$u_k(\alpha_{j_k})\in\Phi_J^+\subseteq\Phi_I^+$.
Using \eqref{eq:vw} and \eqref{beta}, we deduce that
$v_w^\Phi\in\mathbb Z\{\alpha_i:i\in I\}$.

Under the identification $W_J^\Phi\simeq W_J^{\Phi_I}$, the same reduced
expression for $w$ has the same prefixes $u_k$. Since
$\beta^{\Phi_I}=\beta^\Phi$ and
$\hat\lambda_j^{\Phi_I}=\hat\lambda_j^\Phi$ for every $j\in J$,
\eqref{eq:vw} gives
$$
v_w^{\Phi_I}
=\beta^{\Phi_I}
-\sum_{k=1}^r\hat\lambda_{j_k}^{\Phi_I}u_k(\alpha_{j_k})
=\beta^\Phi
-\sum_{k=1}^r\hat\lambda_{j_k}^\Phi u_k(\alpha_{j_k})
=v_w^\Phi.
$$

We next compare the two partition functions on such vectors. Let
$\nu=\sum_{i\in I}b_i\alpha_i$, with $b_i\in\mathbb Z$.
By Remark~\ref{elrem}, applied to $\Phi$ and to $\Phi_I$, both
$\Pt_q^\Phi(\nu)$ and $\Pt_q^{\Phi_I}(\nu)$ vanish if some $b_i<0$.
Suppose therefore that all $b_i\ge0$, and consider a Kostant partition
$\nu=\sum_{\gamma\in\Phi^+}n_\gamma\gamma$, with
$n_\gamma\in\mathbb Z_{\ge0}$. Write
$\gamma=\sum_k c_{\gamma,k}\alpha_k$, with $c_{\gamma,k}\ge0$.
For $k\notin I$, the coefficient of $\alpha_k$ in the expansion of $\nu$
gives $0=\sum_\gamma n_\gamma c_{\gamma,k}$. Since every summand is
nonnegative, whenever $n_\gamma>0$ we have $c_{\gamma,k}=0$ for every
$k\notin I$. Thus every root occurring in the partition lies in
$\Phi\cap U=\Phi_I$, and hence in $\Phi_I^+$. Therefore the Kostant
partitions of $\nu$ in $\Phi$ and in $\Phi_I$ are the same, with the same
number of parts. It follows that
$\Pt_q^\Phi(\nu)=\Pt_q^{\Phi_I}(\nu)$.

Combining the preceding identifications and equalities gives
$\Kf_\Phi(M,\delta)=K^{\Phi_I}_{\lambda_I\mu_I}$.

(2) Write $\sigma$ also for the linear extension $U\to U'$. By
Lemma~\ref{lem:basediso}, conjugation by $\sigma$ is a length-preserving
isomorphism of Weyl groups matching $s_j$ with $s'_{\pi j}$, and
$$\Pt^{\Phi'_{I'}}_q(\sigma\nu)=\Pt^{\Phi_I}_q(\nu).$$	We claim that $\pi(J)=J'$. Since $J\subseteq\mathrm{Cut}$,
$J'\subseteq\mathrm{Cut}'$, and $\pi(\mathrm{Cut})=\mathrm{Cut}'$,
it is enough to consider $i\in\mathrm{Cut}$. For such $i$,
$m'_{\pi i}=m_i$ and $\delta'_{\pi i}=\delta_i$, hence
$$
i\in J
\iff \delta_i\ge m_i
\iff \delta'_{\pi i}\ge m'_{\pi i}
\iff \pi i\in J'.
$$
Therefore $\pi(J)=J'$.   Hence
conjugation by $\sigma$ restricts to a length-preserving isomorphism $W_J\to W'_{J'}$. Moreover, by \eqref{deltai},  for every $j\in J$,
\[
\hat\lambda'_{\pi j}
=2m'_{\pi j}-\delta'_{\pi j}
=2m_j-\delta_j
=\hat\lambda_j,
\]
and, by \eqref{beta},
\[
\beta'
=\sum_{i\in I}m'_{\pi i}\alpha'_{\pi i}
=\sigma(\beta).
\]

Let $w\in W_J$ have a reduced expression $s_{j_1}\cdots s_{j_r}$, and put
$w'=\sigma w\sigma^{-1}$. Then $w'\in W'_{J'}$ has a reduced expression
$s'_{\pi j_1}\cdots s'_{\pi j_r}$, with prefixes
$u'_k=\sigma u_k\sigma^{-1}$. Then by \eqref{eq:vw}
\[
\begin{aligned}
	v'_{w'}
	&=\beta'-\sum_{k=1}^r\hat\lambda'_{\pi j_k}\,
	u'_k(\alpha'_{\pi j_k})\\
	&=\sigma(\beta)-\sum_{k=1}^r\hat\lambda_{j_k}\,
	\sigma\bigl(u_k(\alpha_{j_k})\bigr)\\
	&=\sigma(v_w).
\end{aligned}
\] 
By part~(1), both sides may be computed in their respective support root systems. Applying \eqref{eq:KP} inside $\Phi'_{I'}$ and $\Phi_I$, we obtain
\[
\begin{aligned}
	\Kf_{\Phi'}(M',\delta')
	&=\sum_{w'\in W'_{J'}}
	(-1)^{\ell(w')}\,
	\Pt^{\Phi'_{I'}}_q(v'_{w'})\\
	&=\sum_{w\in W_J}
	(-1)^{\ell(w)}\,
	\Pt^{\Phi'_{I'}}_q\bigl(\sigma(v_w)\bigr)\\
	&=\sum_{w\in W_J}
	(-1)^{\ell(w)}\,
	\Pt^{\Phi_I}_q(v_w)\\
	&=\Kf_{\Phi}(M,\delta).
\end{aligned}
\]

\end{proof}

\section{Recovery of the root system from the shape}\label{sec:recovery}

We now shift towards the geometry: Proposition~\ref{prop:recover} recovers, from the raw shape alone, the
based root system $\Phi_I$ together with the widths, the cutting walls and their depths, so that
Theorem~\ref{thm:parab}(2) applies (Figure~\ref{fig:recovery} summarizes how this argument goes). This is the key missing ingredient for the
proof of Theorem~\ref{thm:main}.

Let $P=P^\Phi_{\mu,\lambda}\subseteq \mu +U.$ 
We will see in the first paragraph of the proof of Lemma~\ref{lem:inventory} below (which does not use the following notion) that $P$ is full-dimensional in $\mu+U$; hence the following is well defined.
By the \emph{outer normal direction} of a facet $F$ of $P$ we mean a nonzero vector $u\in U$, well
defined up to a positive scalar, such that the linear functional $(\,\cdot\,,u)$ attains its maximum
over $P$ exactly at the points of $F$. Note that this is a genuine direction and not merely a line:
$-u$ is not an outer normal direction of $F$, since $(\,\cdot\,,-u)$ attains its minimum
there.

Every
facet of a polyhedron lies on the boundary hyperplane of one of its defining inequalities, whose outer
normal is then that of the facet.
The facet normals of $P$ can be read directly from the three sets of defining inequalities
\begin{equation}\label{Peqs}
P=\{v\in \mu +U\, \vert\, a_i(v)\ge 0, a_i(v)\le m_i\ \mathrm{for}\ i\in I \ \mathrm{and}\ \tau_i(v)\ge 0\ \mathrm{for} \ i\in \mathrm{Cut}\}.
\end{equation}
 By Equation~\eqref{ai},
the hyperplanes $a_i=0$ and $a_i=m_i$ are orthogonal to $\widetilde{\varpi}_i$. Since $P$ lies between them, their outer normals are $-\widetilde{\varpi}_i$ and $\widetilde{\varpi}_i$, respectively. By \eqref{tau}, we have
\begin{equation}\label{sigmai}
\tau_i(v)=\langle v,\alpha_i^\vee\rangle+1
=\frac{(v,\alpha_i)}{D_i}+1,
\end{equation}

so the hyperplane $\tau_i=0$ is orthogonal to $\alpha_i$. Since $P$ lies in the half-space $\tau_i\ge 0$, its outer normal is $-\alpha_i$. Therefore, every facet normal of $P$ is among
$
-\widetilde{\varpi}_i,
\widetilde{\varpi}_i$ for 
 $i\in I$, 
and 
 $-\alpha_i$
 for $ i\in\mathrm{Cut}.$

\begin{lemma}\label{lem:dichotomy}
Among the directions $\{\pm\tilde\varpi_i\}_{i\in I}\cup\{-\alpha_j\}_{j\in\mathrm{Cut}}$, the only
pairs spanning the same line are the antipodal parallelepiped pairs $\{+\tilde\varpi_i,-\tilde\varpi_i\}$.
\end{lemma}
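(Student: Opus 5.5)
Since every vector in the list is nonzero, two of them span the same line exactly when they are proportional. Because $\{\tilde\varpi_i\}_{i\in I}$ and $\{\alpha_i\}_{i\in I}$ are both bases of $U$, the proof reduces to a short case check on the types of vectors involved. There are three cases.

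\emph{Two parallelepiped directions.} Take $\pm\tilde\varpi_i$ and $\pm\tilde\varpi_k$ with $i\neq k$. These are nonzero multiples of two distinct elements of the basis $\{\tilde\varpi_i\}_{i\in I}$, which is the basis of $U$ dual to the simple coroots of $\Phi_I$. Hence they are linearly independent and never span the same line. When $i=k$, the only pair is the antipodal pair $\{\tilde\varpi_i,-\tilde\varpi_i\}$, which does span a line, as stated.

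\emph{Two wall directions.} Take $-\alpha_j$ and $-\alpha_k$ with $j\neq k$ in $\mathrm{Cut}\subseteq I$ (Lemma~\ref{lem:JCutI}). Distinct simple roots are linearly independent, so again they are not proportional.

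\emph{A mixed pair.} This is the only case that needs an argument. Suppose $-\alpha_j=c\,\tilde\varpi_i$ with $c\neq0$, $j\in\mathrm{Cut}$ and $i\in I$. Pair both sides with $\alpha_k^\vee$ for $k\in I$. Using $\langle\tilde\varpi_i,\alpha_k^\vee\rangle=\delta_{ik}$, we get
\[
\langle\alpha_j,\alpha_k^\vee\rangle=0 \quad\text{for all } k\in I\smallsetminus\{i\}.
\]
Taking $k=j$ gives $\langle\alpha_j,\alpha_j^\vee\rangle=2\neq0$, which forces $i=j$. It follows that $\alpha_j$ is orthogonal to every $\alpha_k$ with $k\in I\smallsetminus\{j\}$, so $\{j\}$ is a connected component $I_c$ of $I$ of type $A_1$.

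In that situation $\tilde\varpi_j=\alpha_j/2$, so the line spanned by $-\alpha_j$ coincides with the line spanned by $\pm\tilde\varpi_j$. The proportionality is then genuine, and to rule it out we must show that such a $j$ cannot be a cutting wall. This is the main obstacle. I would settle it with \eqref{deltai}: since $\langle\alpha_k,\alpha_j^\vee\rangle=0$ for $k\in I\smallsetminus\{j\}$, we have $\langle\beta,\alpha_j^\vee\rangle=2m_j$, hence
\[
\delta_j=2m_j-\hat\lambda_j=2m_j-(2m_j+\hat\mu_j)=-\hat\mu_j\le-1.
\]
So $j\notin\mathrm{Cut}$, contradicting the assumption. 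Geometrically, the wall for an isolated $A_1$ node is parallel to the $j$-th pair of parallelepiped facets and lies outside $\Pi$.

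Combining the three cases, the antipodal parallelepiped pairs are the only pairs spanning a common line.
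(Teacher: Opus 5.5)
Your proposal is correct and follows essentially the same route as the paper. The paper also treats the two basis cases separately and, in the mixed case, derives that $\alpha_j$ is orthogonal to the rest of $I$, obtaining $\delta_j=-\hat\mu_j\le-1$ via \eqref{beta}, \eqref{deltai} and \eqref{lambdamu}. The only cosmetic difference is that the paper rules out the mixed pair with $i\neq j$ by noting $(\alpha_i,\tilde\varpi_j)=0$, whereas you handle both mixed subcases with a single coroot pairing, which forces $i=j$ through $\langle\alpha_j,\alpha_j^\vee\rangle=2$.
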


\begin{proof}
The $\tilde\varpi_i$ are a basis of $U$, thus pairwise non-parallel. Likewise, the $\alpha_i$ ($i\in I$) form a basis of $U$, hence are pairwise non-parallel. Supposing $\alpha_i\parallel
\tilde\varpi_j$ with $i\ne j$, gives a contradiction because $(\alpha_i,\tilde\varpi_j)=D_i\delta_{ji}=0$. Assume finally that $i\in\mathrm{Cut}$ and $\alpha_i\parallel\tilde\varpi_i$, and write $\tilde\varpi_i=c\,\alpha_i$. Pairing
with $\alpha_k^\vee$ gives $c\langle\alpha_i,\alpha_k^\vee\rangle=\delta_{ik}$ for all $k\in I$, so
$c=\tfrac12\neq 0$.  By \eqref{beta} one has $\langle\beta,\alpha_i^\vee\rangle=2m_i$, so by \eqref{deltai} and \eqref{lambdamu}
$\delta_i=-\hat\mu_i\le-1$, which contradicts  $i\in\mathrm{Cut}$.
\end{proof}
 
 Up to translation by $\rho$, our polytope $P$ is obtained from
 the dominant weight polytope associated with $\lambda+\rho$
 by intersecting it with $\mu+\rho+C$.
 The next lemma describes its facets, in the spirit of
 \cite[Theorem~2.8]{BGH}, which describes all faces of
 strongly dominant weight polytopes (a stronger result for a slightly simpler polytope).
 
\begin{lemma}\label{lem:inventory}
The polytope $P$ is full-dimensional in $\mu+U$, a space of dimension $|I|$. Its
facets are exactly the following, each occurring once:
\begin{itemize}
\item for each $i\in I$, the two \emph{parallelepiped facets} $P\cap\{a_i=0\}$ and $P\cap\{a_i=m_i\}$, with
outer normals $-\tilde\varpi_i$ and $+\tilde\varpi_i$ respectively;
\item for each cutting wall $i\in\mathrm{Cut}$, the \emph{wall facet} $P\cap\{\tau_i=0\}$, with outer
normal $-\alpha_i$.
\end{itemize}
In particular, $P$ has exactly $2|I|+|\mathrm{Cut}|$ facets.
\end{lemma}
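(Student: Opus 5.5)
Full-dimensionality comes first: find a point of $P$ where every defining inequality in \eqref{Peqs} is strict. Then, for each listed inequality, produce a point of $P$ where that inequality alone is tight. This shows that each hyperplane meets $P$ in a face of dimension $|I|-1$. Lemma~\ref{lem:dichotomy} guarantees that distinct inequalities have non-parallel outer normals, except for the antipodal pairs $\pm\tilde\varpi_i$, which bound disjoint parallel slabs (recall $m_i>0$ on $I$). So distinct inequalities give distinct facets, and the count $2|I|+|\mathrm{Cut}|$ follows. Every facet lies on one of the defining hyperplanes, so the list is exhaustive.

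\textbf{Interior point.} Near $\lambda$, inside $\Pi$, take $v=\lambda-\epsilon\sum_{i\in I}\alpha_i$. Its coordinates are $a_i=m_i-\epsilon\in(0,m_i)$. For every $k$, $\tau_k(\lambda)=\hat\lambda_k\ge1$, so for small $\epsilon$ all $\tau_k(v)>0$. Hence $v$ lies in the interior of $P$ relative to $\mu+U$, and $P$ is full-dimensional there.

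\textbf{Facets $a_i=m_i$ and wall facets.} For the facet $a_i=m_i$, perturb $\lambda$ in the other directions only: $v=\lambda-\epsilon\sum_{k\in I\setminus\{i\}}\alpha_k$. All $\tau$'s remain positive and only $a_i=m_i$ is tight. Combined with the interior point, this gives an $(|I|-1)$-ball in the facet.

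For the wall facets, use the edge $E_i$ from $v_i^{\min}$ to $\lambda$ described after Lemma~\ref{lem:JCutI}. The wall $\tau_i=0$ meets $E_i$ at $a_i=\delta_i/2$. When $i\in\mathrm{Cut}$ we have $0<\delta_i/2<\delta_i\le 2m_i$, so this point lies strictly inside $E_i$ (also $\delta_i=2m_i-\hat\lambda_i<2m_i$). Away from the corner the other coordinates equal $m_k$, which is tight. So instead consider the segment from the interior point above to the point $x$ on $E_i$ with $a_i=\delta_i/2$, and perturb. Concretely, take $x'=x-\epsilon\sum_{k\ne i}\alpha_k+c\epsilon\,\alpha_i$ with $c$ chosen so that $\tau_i(x')=0$; this works because $\tau_i$ changes by $2c\epsilon-\epsilon\sum_{k\neq i}\langle\alpha_k,\alpha_i^\vee\rangle$. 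The remaining check is that $\tau_j(x')>0$ for $j\ne i$.

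\textbf{Main obstacle.} The step I expect to be hardest is exactly this positivity check, together with the analogous check for the facets $a_i=0$ near $\mu$. One must show that the walls never coincide on a lower-dimensional region that makes a wall inequality redundant, and never cut away all of $\{a_i=0\}$.

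For $a_i=0$, I would try the point $\mu+\epsilon\sum_{k\ne i}\alpha_k$. Here $\tau_j(\mu)=\hat\mu_j\ge1$, so the small perturbation keeps every $\tau_j$ positive, and the facet is fine.

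For the wall facets I would compute $\tau_j(x)$ for $j\ne i$. At $x=v_i^{\min}+\tfrac{\delta_i}{2}\alpha_i$ one has $a_k=m_k$ for $k\ne i$, so
\[
\tau_j(x)=\tau_j(\lambda)-\bigl(m_i-\tfrac{\delta_i}{2}\bigr)\langle\alpha_i,\alpha_j^\vee\rangle=\hat\lambda_j+(\text{nonnegative})\ge1,
\]
since $\langle\alpha_i,\alpha_j^\vee\rangle\le0$ and $m_i-\delta_i/2=\hat\lambda_i/2>0$. Thus $x$ is strictly inside all other walls and all parallelepiped constraints except $a_k=m_k$ for $k\ne i$. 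The perturbation above then moves $x$ off those constraints while keeping $\tau_i=0$. This produces a relative-interior point of the wall facet and completes the inventory.
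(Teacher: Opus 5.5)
Your proof is correct and follows the paper's argument essentially step for step. You use the same witness points $\mu+\epsilon\sum_{k\in I\setminus\{i\}}\alpha_k$ and $\lambda-\epsilon\sum_{k\in I\setminus\{i\}}\alpha_k$ for the parallelepiped facets, the same point $\mu+\tfrac{\delta_i}{2}\alpha_i+\sum_{k\in I\setminus\{i\}}m_k\alpha_k$ with the same wall-parallel perturbation for the wall facets, and Lemma~\ref{lem:dichotomy} for distinctness. The only differences are cosmetic: your full-dimensionality witness lies near $\lambda$ rather than near $\mu$, and your sums $\sum_{k\ne i}$ should run over $k\in I$ so that the points stay in $\mu+U$.
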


\begin{proof}

We first prove that $P$ is full-dimensional. At $v=\mu$, all wall inequalities are strict, since $\tau_i(\mu)=\widehat{\mu}_i\ge 1$. Choose $\varepsilon>0$ sufficiently small and set
\[
v_\varepsilon=\mu+\varepsilon\sum_{i\in I}\alpha_i.
\]
Then $0<a_i(v_\varepsilon)=\varepsilon<m_i$ for every $i\in I$, and $\tau_i(v_\varepsilon)>0$ for every $i\in\mathrm{Cut}$. Thus all the inequalities defining $P$ are strict at $v_\varepsilon$, so a neighborhood of $v_\varepsilon$ in $\mu+U$ is contained in $P$. Then $\dim P=\dim U=|I|$.

Each boundary hyperplane defining $P$ has codimension one in
$\mu+U$: this is immediate from the expressions \eqref{ai} for $a_i$ and \eqref{sigmai} for
$\tau_i$.

To show that one of these inequalities defines a facet, it is enough
to find a point  on its boundary hyperplane at which all the other
defining inequalities are strict. By continuity, the corresponding
face then contains a relatively open neighborhood of that point in
the boundary hyperplane, and therefore has dimension $|I|-1$.

Fix $i\in I$. As explained above, at $v=\mu$ all wall inequalities are strict. Keeping $a_i=0$, set $a_l=\varepsilon$ for every $l\in I\smallsetminus\{i\}$. For $\varepsilon>0$ sufficiently small, all the other parallelepiped inequalities and all other wall inequalities remain strict. Hence $P\cap \{a_i=0\}$ is a facet, with outer normal $-\widetilde{\varpi}_i$.

The same argument applies to $P\cap\{a_i=m_i\}$, for $i\in I$ fixed. Starting from $v=\lambda$, keep $a_i=m_i$ and set $a_l=m_l-\varepsilon$ for every $l\in I\smallsetminus\{i\}$. Since all wall inequalities are strict at $\lambda$, they remain strict for sufficiently small $\varepsilon>0$. Thus $P\cap\{a_i=m_i\}$ is a facet, with outer normal $\widetilde{\varpi}_i$.

We now treat the case of wall facets.
Fix $i\in\mathrm{Cut}$. We first construct a point $y_0$ on the
$i$-th wall which strictly satisfies all the other wall inequalities.
This point may lie on some upper facets of the parallelepiped, so we
then move it slightly, along the $i$-th wall, into the interior of
$\Pi$. The resulting point $y_\varepsilon$, for sufficiently small
$\varepsilon>0$, will be the desired point.

Set
\[
y_0
=
\mu
+
\frac{\delta_i}{2}\alpha_i
+
\sum_{k\in I\smallsetminus\{i\}}m_k\alpha_k.
\]
Since $1\le\delta_i\le 2m_i-1$, the coefficient
$\delta_i/2$ lies strictly between $0$ and $m_i$. Moreover, \linebreak
$y_0=\lambda-(m_i-\delta_i/2)\alpha_i$, and hence by \eqref{tau}
\[
\tau_i(y_0)
=
\widehat{\lambda}_i
-
2\left(m_i-\frac{\delta_i}{2}\right)
=
0.
\]      
For $k\neq i$, using \eqref{lambdamu} we have
\[
\tau_k(y_0)
=
\widehat{\lambda}_k
-
\left(m_i-\frac{\delta_i}{2}\right)
\langle\alpha_i,\alpha_k^\vee\rangle
\ge
\widehat{\lambda}_k
\ge 1,
\]
because $\langle\alpha_i,\alpha_k^\vee\rangle\le 0$.   Thus $y_0$
lies on the $i$-th wall and strictly satisfies every other wall
inequality. However, $a_k(y_0)=m_k$ for every $k\neq i$, and so we need to slightly perturb this vector.

For $\varepsilon>0$, define
\[
y_\varepsilon
=
y_0
-
\varepsilon
\sum_{k\in I\smallsetminus\{i\}}
\left(
\alpha_k
+
\frac{1}{2}
\left|
\langle\alpha_k,\alpha_i^\vee\rangle
\right|
\alpha_i
\right).
\]
Each vector in the sum is parallel to the $i$-th wall, since
\[
\left\langle
\alpha_k
+
\frac{1}{2}
\left|
\langle\alpha_k,\alpha_i^\vee\rangle
\right|
\alpha_i,
\alpha_i^\vee
\right\rangle
=
\langle\alpha_k,\alpha_i^\vee\rangle
+
\left|
\langle\alpha_k,\alpha_i^\vee\rangle
\right|
=
0.
\] 
Thus $\tau_i(y_\varepsilon)=0$ for every $\varepsilon$.
For sufficiently small $\varepsilon>0$, we have
$0<a_k(y_\varepsilon)<m_k$ for all $k\in I$ and, by continuity,
$\tau_k(y_\varepsilon)>0$ for all
$k\in\mathrm{Cut}\smallsetminus\{i\}$.
Hence $P\cap\{\tau_i=0\}$ is a facet with outer normal $-\alpha_i$.

Finally, the $2|I|+|\mathrm{Cut}|$ facets just produced are pairwise different. Two of them could
coincide only if their boundary hyperplanes did, hence only if their outer normals spanned the same
line; by Lemma~\ref{lem:dichotomy} the only such pairs are $\{\pm\tilde\varpi_i\}$, whose
hyperplanes $\{a_i=0\}$ and $\{a_i=m_i\}$ are distinct because $m_i>0$.
\end{proof}

\begin{figure}[ht]
\centering
\begin{tikzpicture}[scale=1.05,>={Stealth[length=5pt]},line join=round]
 \begin{scope}
  \coordinate (h1) at (0,0);
  \coordinate (h2) at (1.4142,0);
  \coordinate (h3) at (2.4749,0.6124);
  \coordinate (h4) at (1.4142,2.4495);
  \coordinate (h5) at (-0.7071,2.4495);
  \coordinate (h6) at (-0.7071,1.2247);
  \fill[polyfill] (h1)--(h2)--(h3)--(h4)--(h5)--(h6)--cycle;
  \draw[boxblue,very thick] (h1)--(h2)--(h3)--(h4)--(h5)--(h6)--cycle;
  \node[font=\small] at (0.7,-0.7) {the raw shape};
 \end{scope}
 \draw[->,thick] (3.05,1.1)--(5.35,1.1);
 \node[font=\scriptsize,align=center] at (4.05,1.3) {read off};
 \node[font=\scriptsize,align=center] at (4.05,0.88) {the realization};
 \begin{scope}[xshift=7.2cm]
  \coordinate (O)   at (0,0);
  \coordinate (P1)  at (2.8284,0);
  \coordinate (P12) at (1.4142,2.4495);
  \coordinate (P2)  at (-1.4142,2.4495);
  \coordinate (h1) at (0,0);
  \coordinate (h2) at (1.4142,0);
  \coordinate (h3) at (2.4749,0.6124);
  \coordinate (h4) at (1.4142,2.4495);
  \coordinate (h5) at (-0.7071,2.4495);
  \coordinate (h6) at (-0.7071,1.2247);
  \draw[ghost!130,dashed,thick] (O)--(P1)--(P12)--(P2)--cycle;
  \fill[ghost!130] (P1) circle (1.2pt);
  \node[ghost!150] at (1.9,2.15) {$\Pi$};
  \fill[polyfill] (h1)--(h2)--(h3)--(h4)--(h5)--(h6)--cycle;
  \draw[boxblue,very thick] (h1)--(h2);
  \draw[boxblue,very thick] (h4)--(h5);
  \draw[boxgreen,very thick] (h3)--(h4);
  \draw[boxgreen,very thick] (h6)--(h1);
  \draw[clipred,very thick] (h2)--(h3);
  \draw[clipred,very thick] (h5)--(h6);
  \draw[->,black] (0.7071,0)--(0.7071,-0.5);
  \node[black,font=\scriptsize] at (0.7071,-0.78) {$-\tilde\varpi_2$};
  \draw[->,black] (0.3536,2.4495)--(0.3536,2.9495);
  \node[black,font=\scriptsize] at (0.3536,3.24) {$\tilde\varpi_2$};
  \draw[->,black] (1.9446,1.531)--(2.3776,1.781);
  \node[black,font=\scriptsize] at (2.6201,1.921) {$\tilde\varpi_1$};
  \draw[->,black] (-0.3536,0.6124)--(-0.7866,0.3624);
  \node[black,font=\scriptsize] at (-1.0291,0.2224) {$-\tilde\varpi_1$};
  \draw[->,black] (1.9446,0.3062)--(2.1946,-0.1268);
  \node[black,font=\scriptsize] at (2.3346,-0.3693) {$-\alpha_2$};
  \node[ghost!160,font=\scriptsize] at (2.66,-0.66) {(no antipodal partner)};
  \draw[->,black] (-0.7071,1.8371)--(-1.2071,1.8371);
  \node[black,font=\scriptsize] at (-1.4871,1.8371) {$-\alpha_1$};
  \node[ghost!160,font=\scriptsize] at (-2.85,2.20) {(no antipodal partner)};
  \draw[->,boxblue,thick] (O)--(0.7071,0);
  \draw[->,boxgreen,thick] (O)--(-0.3536,0.6124);
  \node[boxblue,font=\scriptsize,below=0pt] at (0.5,0.4) {$\alpha_1$};
  \node[boxgreen,font=\scriptsize,right=-1pt] at (-0.22,0.52) {$\alpha_2$};
  \fill[black] (O) circle (1.5pt);
  \node[below left=-3pt] at (O) {$\mu$};
 \end{scope}
\end{tikzpicture}
\caption{\textbf{Recovering the realization from the raw shape}: From the raw shape (left) one reads off its
facet normals (right). Four of them form two antipodal pairs: these are the normals of the
parallelepiped $\Pi$, and extending the corresponding facets rebuilds $\Pi$. The other two,
$-\alpha_1$ and $-\alpha_2$, have no antipodal partner: they are the normals of the cut walls. The
edges of $\Pi$ leaving $\mu$ are $m_1\alpha_1$ and $m_2\alpha_2$; their lengths give the widths
$m_i$ and the root lengths $\lVert\alpha_i\rVert$, and the angle between them fixes the Cartan
integers, hence the Dynkin diagram.}
\label{fig:recovery}
\end{figure}

Let $P=P^\Phi_{\mu,\lambda}$ and $P'=P^{\Phi'}_{\mu',\lambda'}$
be two realized polytopes, with support spaces $U$ and $U'$, respectively.
Recall that the \emph{affine hull} of a subset $B$ of a Euclidean space, denoted $\operatorname{aff}(B)$, is the
smallest affine subspace containing it.
By Lemma~\ref{lem:inventory}, $\operatorname{aff}(P)=\mu+U$ and
$\operatorname{aff}(P')=\mu'+U'$.   A surjective isometry between two subsets of Euclidean spaces extends uniquely to an
affine isometry of their affine hulls. 
 Hence every isometry\footnote{Here isometry means a distance-preserving bijection for the induced Euclidean
	metrics, with both root systems normalized so that long roots have length $\sqrt2$ (as in Theorem~\ref{thm:main}); in
	particular, no rescaling is allowed. The ambient spaces $V$ and $V'$ may have
	different dimensions.} $\varphi:P\to P'$ extends uniquely
to an affine isometry
\[
\widehat\varphi:\mu+U\longrightarrow\mu'+U'.
\]

For an affine map $f$ between affine subspaces we write $L_f$ for its \emph{linear part}, the linear
map characterized by $L_f(v-w)=f(v)-f(w)$; we abbreviate $L_\varphi\coloneqq L_{\widehat\varphi}:U\to U'$.
Thus $\widehat\varphi$ is a translation composed with its linear part,
\[
  \widehat\varphi(v)=\varphi(\mu)+L_\varphi(v-\mu)\qquad(v\in\mu+U),
\]
and $L_\varphi$, being a linear isometry, preserves inner products, lengths and angles. Linear parts
behave well under the operations we shall perform on isometries: $L_{f\circ h}=L_f\circ L_h$ and
$L_{f^{-1}}=L_f^{-1}$. It is through $L_\varphi$ that an isometry of the raw shapes transports the
root-theoretic data of one realization to the other.

\begin{lemma}\label{lem:normals}
Let $\varphi:P\to P'$ be an isometry of realized polytopes. If $F$ is a facet of $P$ with outer
normal direction $u$, then $\widehat\varphi(F)$ is a facet of $P'$ with outer normal direction
$L_\varphi(u)$. Then $L_\varphi$ carries the set of outer normal directions of the facets  of $P$
bijectively onto that of $P'$, sending antipodal pairs to antipodal pairs and directions without
antipodal partner to directions without antipodal partner.
\end{lemma}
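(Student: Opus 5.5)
The plan is to use only that $\widehat\varphi$ is an affine isometry between the affine hulls $\mu+U$ and $\mu'+U'$ and restricts to the bijection $\varphi:P\to P'$. Let $F$ be a facet of $P$ with outer normal direction $u$, so $F$ is exactly the set of maximizers of $(\,\cdot\,,u)$ over $P$.

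The first step is to transport the functional. For $v\in\mu+U$ we have
\[
\bigl(\widehat\varphi(v),L_\varphi u\bigr)
=\bigl(\varphi(\mu),L_\varphi u\bigr)+\bigl(L_\varphi(v-\mu),L_\varphi u\bigr)
=\mathrm{const}+(v-\mu,u).
\]
The second equality holds because $L_\varphi$ preserves inner products. Thus, under $\widehat\varphi$, the functional $(\,\cdot\,,L_\varphi u)$ on $\mu'+U'$ corresponds to $(\,\cdot\,,u)$ on $\mu+U$ up to an additive constant. Since $\widehat\varphi(P)=P'$, the maximizers of $(\,\cdot\,,L_\varphi u)$ over $P'$ are exactly $\widehat\varphi(F)$.

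The second step is to check that $\widehat\varphi(F)$ is a facet. An affine bijection preserves the dimension of affine hulls, so $\widehat\varphi(F)$ is a face of $P'$ of dimension $|I|-1=\dim P'-1$, hence a facet. Here Lemma~\ref{lem:inventory} gives $\dim P=|I|$, and $\dim P'=\dim U'=|I|$ because $\widehat\varphi$ is bijective between the hulls. Also $L_\varphi u\in U'$ is nonzero, so it is an outer normal direction of $\widehat\varphi(F)$. Because the outer normal of a facet of a full-dimensional polytope is unique up to positive scalars, this direction is well defined.

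The third step is bijectivity. Facets of $P$ correspond bijectively to their outer normal directions: distinct facets have distinct normal directions, and by definition each direction is the normal of a single facet. Applying the same argument to $\varphi^{-1}$, with $L_{\varphi^{-1}}=L_\varphi^{-1}$, shows that $F\mapsto\widehat\varphi(F)$ is a bijection on facets. Hence $L_\varphi$ maps the set of normal directions of $P$ bijectively onto that of $P'$.

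The last step is antipodality. Since $L_\varphi$ is linear, $L_\varphi(-u)=-L_\varphi(u)$. So if $u$ and $-u$ are both facet normals of $P$, their images are both facet normals of $P'$ and are antipodal. Conversely, if $L_\varphi u$ has an antipodal partner $-L_\varphi u$ among the normals of $P'$, then by the bijection and $L_\varphi^{-1}$ the direction $-u$ is a normal of $P$. So directions without an antipodal partner go to directions without one.

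The only point requiring care is the well-definedness of directions up to positive scaling, and this follows because $L_\varphi$ is linear and preserves positive multiples. No real obstacle is expected.
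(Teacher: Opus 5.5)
Your proposal is correct and follows essentially the paper's argument: both transport the functional $(\,\cdot\,,u)$ through the inner-product-preserving linear part $L_\varphi$, so that $\widehat\varphi(F)$ is exactly the set of maximizers of $(\,\cdot\,,L_\varphi u)$ on $P'$, and both get antipodality from linearity. The paper states bijectivity and the converse direction for unpaired normals more tersely, and your dimension count and use of $\varphi^{-1}$ with $L_{\varphi^{-1}}=L_\varphi^{-1}$ simply spell out those steps.
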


\begin{proof}
The affine bijection $\widehat\varphi$ carries $P$ onto $P'$, hence faces to faces and facets to
facets. For $v\in\mu+U$,
\[
\bigl(\widehat\varphi(v),\,L_\varphi(u)\bigr)
=\bigl(\varphi(\mu)+L_\varphi(v-\mu),\,L_\varphi(u)\bigr)
=(v,u)+c,
\qquad c\coloneqq \bigl(\varphi(\mu),L_\varphi(u)\bigr)-(\mu,u),
\]
because $L_\varphi$ preserves inner products. Hence $(\,\cdot\,,L_\varphi(u))$ attains its maximum
on $P'$ exactly on $\widehat\varphi(F)$, which is the assertion. The last sentence follows because
$L_\varphi$ is a linear map. 
\end{proof}

The matching of edge generators below is similar to Gover's
description of congruence for zonotopes \cite[Theorem~2.2]{Gover}.
Our parallelepipeds are particular cases of zonotopes (called
parallelotopes in his paper).

\begin{proposition}\label{prop:recover}
Let $P=P^{\Phi}_{\mu,\lambda}$ and $P'=P^{\Phi'}_{\mu',\lambda'}$ be realized polytopes, and let
$\varphi:P\to P'$ be an isometry, with affine extension $\widehat\varphi$ and linear part
$L_\varphi:U\to U'$ as above. Then:
\begin{enumerate}
\item $\widehat\varphi(\Pi)=\Pi'$.
\item\label{perm} There is a bijection $\pi:I\to I'$ such that $  L_\varphi(\alpha_i)=\varepsilon_i\,\alpha'_{\pi i} $ for $i\in I$, for certain signs $\varepsilon_i\in\{+1,-1\}$. Moreover, after composing $\varphi$ with a self-isometry $\iota$ of $P'$,
the signs can all be chosen to be $+1$:
$L_{\iota\circ\varphi}(\alpha_i)=\alpha'_{\pi i}$ for $i\in I$.
In particular, $\sigma:\alpha_i\mapsto\alpha'_{\pi i}$ is a based
isometry
$(\Phi_I,\{\alpha_i\}_{i\in I})\to
(\Phi'_{I'},\{\alpha'_{i'}\}_{i'\in I'})$.
\item The same bijection $\sigma$ matches the rest of the shape data: $m'_{\pi i}=m_i$ for every $i\in I$, and
$\pi(\mathrm{Cut})=\mathrm{Cut}'$ with $\delta'_{\pi i}=\delta_i$ for every $i\in\mathrm{Cut}$.
\end{enumerate}
\end{proposition}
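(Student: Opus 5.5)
The plan is to read everything off the facet normals, using Lemma~\ref{lem:inventory} and Lemma~\ref{lem:normals}. By Lemma~\ref{lem:inventory} and Lemma~\ref{lem:dichotomy}, the outer normal directions of $P$ that have an antipodal partner are exactly $\pm\tilde\varpi_i$ for $i\in I$. Those without a partner are exactly $-\alpha_j$ for $j\in\mathrm{Cut}$. By Lemma~\ref{lem:normals}, $L_\varphi$ maps each antipodal pair $\{\pm\tilde\varpi_i\}$ to some pair $\{\pm\tilde\varpi'_{i'}\}$, and this defines the bijection $\pi:I\to I'$. It also maps each unpaired direction $-\alpha_j$ to a positive multiple of some $-\alpha'_{j'}$ with $j'\in\mathrm{Cut}'$.

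\emph{Step 1 (part (1) and the edge directions).} The parallelepiped $\Pi$ is the intersection of the closed half-spaces bounded by the facets of the antipodal pairs. Since $\widehat\varphi$ maps these facets, hence their affine hyperplanes, to the corresponding ones of $P'$, it maps $\Pi$ onto $\Pi'$. This proves (1). Next, $\alpha_i$ is characterized up to scalar as the vector orthogonal to all $\tilde\varpi_k$ with $k\in I\smallsetminus\{i\}$, because $(\alpha_i,\tilde\varpi_k)=D_i\delta_{ik}$. Hence $L_\varphi(\alpha_i)=c_i\alpha'_{\pi i}$ for some nonzero real $c_i$. Edges of $\Pi$ go to edges of $\Pi'$, so comparing the edge in direction $\alpha_i$ gives $m_i\lVert\alpha_i\rVert=m'_{\pi i}\lVert\alpha'_{\pi i}\rVert$ and $\lVert\alpha_i\rVert=|c_i|\,\lVert\alpha'_{\pi i}\rVert$. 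Together these give $m'_{\pi i}=|c_i|\,m_i$, so $|c_i|$ is rational. On the other hand, $|c_i|$ is a ratio of root lengths. Under the normalization, squared root lengths lie in $\{2,1,2/3\}$, and every such ratio other than $1$ is irrational. Thus $|c_i|=1$ and $m'_{\pi i}=m_i$. This is the one place where the normalization of long roots to length $\sqrt2$ is essential.

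\emph{Step 2 (signs and the reversal $\iota$).} Write $c_i=\varepsilon_i$. Since $L_\varphi$ is an isometry, $(\alpha_i,\alpha_k)=\varepsilon_i\varepsilon_k(\alpha'_{\pi i},\alpha'_{\pi k})$. Distinct simple roots have nonpositive inner product, so $\varepsilon_i=\varepsilon_k$ whenever $i,k$ are adjacent. Hence the sign is constant on each connected component $I_c$. For $j\in\mathrm{Cut}$, the unpaired normal $-\alpha_j$ must map to a positive multiple of an unpaired normal, which forces $\varepsilon_j=+1$; so components meeting $\mathrm{Cut}$ have sign $+1$. For a component with no cutting wall, I will use that $\langle\alpha_i,\alpha_k^\vee\rangle=0$ across components. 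Each $\tau_k$ then involves only the coordinates $a_i$ from the component of $k$, so $P'$ splits as a product over components. The factors without cuts are pure parallelepipeds $\prod_{i}[0,m'_i]\alpha'_i$. The point reflection through the centre of such a factor, extended by the identity on the other factors, is a self-isometry $\iota$ of $P'$ with linear part $-1$ on that $U'_c$. Composing with these reflections makes all signs $+1$. Then $\sigma$ preserves Gram matrices, so it is a based isometry, which proves (2).

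\emph{Step 3 (cuts and depths).} Once all signs are $+1$, $\widehat\varphi$ sends each lower facet $\{a_i=0\}$ to $\{a'_{\pi i}=0\}$. Hence it sends $\mu$, the intersection of the lower facets, to $\mu'$, and sends the vertex $v_i^{\min}$ and the edge $E_i$ to $v'^{\min}_{\pi i}$ and $E'_{\pi i}$. The unpaired normals now satisfy $L(-\alpha_j)=-\alpha'_{\pi j}$, which gives $\pi(\mathrm{Cut})=\mathrm{Cut}'$ with wall facet $j$ going to wall facet $\pi j$. The wall $j$ meets $E_j$ at distance $\tfrac{\delta_j}{2}\lVert\alpha_j\rVert$ from $v_j^{\min}$. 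Since $\widehat\varphi$ preserves this distance and $\lVert\alpha_j\rVert=\lVert\alpha'_{\pi j}\rVert$, we get $\delta'_{\pi j}=\delta_j$, which is (3). I expect the main obstacle to be Step 2. There one must check carefully that $P'$ really is the product of its component factors, and that the centre reflection of a cut-free factor preserves $P'$; the rationality argument in Step 1 is what removes the scaling ambiguity.
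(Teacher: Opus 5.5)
Your proof is correct and follows the paper's argument essentially step for step: antipodal normals rebuild $\Pi$, edge lengths plus irrationality of root-length ratios give $|c_i|=1$ and $m'_{\pi i}=m_i$, signs are constant on components and $+1$ on components meeting $\mathrm{Cut}$, and centre reflections of cut-free factors fix the rest. The variations are that you obtain $\pi$ from the pairs $\pm\tilde\varpi_i$ by orthogonality rather than from parallel classes of edges, and you measure $\delta_j$ along $E_j$ from $v_j^{\min}$ after normalizing signs; the paper instead measures the distance from $\{\tau_i=0\}$ to the far supporting hyperplane $\{\tau_i=-\delta_i\}$ of $\Pi$, which needs no sign normalization and is reused in the construction of $K_P$. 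One point of order: before reflecting in Step~2, record that $\pi$ maps components of $I$ onto components of $I'$, and that $\pi(\mathrm{Cut})=\mathrm{Cut}'$ (this already follows there from $\varepsilon_j=+1$ and the bijectivity in Lemma~\ref{lem:normals}), so that $\pi(I_c)$ is genuinely a cut-free factor of $P'$.
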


\begin{proof}

We begin with (1). By Lemma~\ref{lem:normals} the map $L_\varphi$ carries the outer facet-normal
directions of $P$ bijectively onto those of $P'$, preserving antipodality. 	By Lemmas~\ref{lem:dichotomy} and~\ref{lem:inventory}, applied to $P$ and to $P'$, the only
antipodal pairs of facet-normal directions are the parallelepiped pairs
$\{\pm\tilde\varpi_i\}$, while the remaining directions are precisely
the wall normals. This implies that $\widehat\varphi$ induces a bijection between the parallelepiped facets: 
\[
\mathcal F_{\mathrm{par}}(P)\longrightarrow\mathcal F_{\mathrm{par}}(P'),
\qquad
F\longmapsto\widehat\varphi(F).
\]
 For such a facet $F$, let $H_P(F)$ be the closed half-space of $\operatorname{aff}(P)$ bounded by $\operatorname{aff}(F)$ and containing $P$. By Lemma~\ref{lem:inventory},
\[
\Pi=\bigcap_{F\in\mathcal F_{\mathrm{par}}(P)}H_P(F),
\]
and analogously for $\Pi'$. Since $\widehat\varphi$ is an affine bijection carrying $P$ onto $P'$, it sends $H_P(F)$ onto $H_{P'}(\widehat\varphi(F))$. Taking intersections therefore gives
\[
\widehat\varphi(\Pi)=\Pi',
\]
proving (1). Thus $\Pi$ is canonically determined by the raw shape.

We turn to the construction of the bijection $\pi$. By part (1) the affine bijection
$\widehat\varphi$ carries $\Pi$ onto $\Pi'$, hence carries the face lattice of $\Pi$ onto that of
$\Pi'$; in particular it maps edges of $\Pi$ to edges of $\Pi'$. The edges of $\Pi$ fall into $|I|$ parallel classes, which we index by $I$
so that the $i$-th class consists of the edges parallel to $\alpha_i$.
Every edge in this class has length $m_i\|\alpha_i\|$. Since $\widehat\varphi$ preserves parallelism and lengths, it induces a bijection $\pi:I\to I'$ such that
\begin{equation}\label{dos}
L_\varphi(\R\alpha_i)=\R\alpha'_{\pi i}
\qquad\text{and}\qquad
m_i\lVert\alpha_i\rVert
=m'_{\pi i}\lVert\alpha'_{\pi i}\rVert.
\end{equation}
As the long roots have length  $\sqrt{2}$, the possible lengths of simple roots are
$\lVert\alpha\rVert\in\{\sqrt2,\,1,\,\sqrt{2/3}\}$: long roots, short roots in types $B,C,F_4$, and
short roots in $G_2$, respectively. The ratio of any two distinct values in this list is irrational, but the second  equation
of \eqref{dos} forces $\lVert\alpha_i\rVert/\lVert\alpha'_{\pi i}\rVert=m'_{\pi i}/m_i$, a ratio of positive
integers. This implies that $\lVert\alpha_i\rVert=\lVert\alpha'_{\pi i}\rVert$, then $D'_{\pi i}=D_i$, and
$m'_{\pi i}=m_i$, which is the first assertion of (3). By \eqref{dos}, the vector
$L_\varphi(\alpha_i)$ lies on the line $\R\alpha'_{\pi i}$, so
$L_\varphi(\alpha_i)=t\,\alpha'_{\pi i}$ for some real number $t$. As $L_\varphi$ is an isometry,
$\lVert\alpha_i\rVert= |t|\,\lVert\alpha'_{\pi i}\rVert$, and we have
just seen that $\lVert\alpha_i\rVert=\lVert\alpha'_{\pi i}\rVert$, so $|t|=1$. In other words
$L_\varphi(\alpha_i)=\varepsilon_i\,\alpha'_{\pi i}$
for a sign $\varepsilon_i\in\{+1,-1\}.$
This is the first assertion of (2).

Let us prove that $\pi(\mathrm{Cut})=\mathrm{Cut}'$.
For $i\in\mathrm{Cut}$, by Lemmas~\ref{lem:dichotomy} and \ref{lem:inventory}, the normal $-\alpha_i$ has no antipodal partner.
By Lemma~\ref{lem:normals},  its image
$L_\varphi(-\alpha_i)$ is therefore a positive multiple of $-\alpha'_j$
for some $j\in\mathrm{Cut}'$.
But $L_\varphi(-\alpha_i)=-\varepsilon_i\alpha'_{\pi i}$,
and distinct simple roots are not parallel, so $j=\pi i$.
The positivity of the multiple also gives $\varepsilon_i=+1$.
Thus $\pi(\mathrm{Cut})\subseteq\mathrm{Cut}'$; applying the same
argument to $\varphi^{-1}$ gives the equality.

The signs are constant on the components of $I$. Indeed, since $L_\varphi$ preserves inner products,
\[
\,(\alpha_i,\alpha_j)=\varepsilon_i\varepsilon_j(\alpha'_{\pi i},\alpha'_{\pi j})
\qquad(i,j\in I).
\]
Call $i\ne j$ in $I$ \emph{adjacent} if $(\alpha_i,\alpha_j)\ne0$, i.e.\ if they are joined in the
Dynkin diagram. The displayed identity shows that $\pi$ preserves adjacency and non-adjacency, so
$\pi$ maps the connected components of $I$ onto those of $I'$. Since $(\alpha_i,\alpha_j)\le 0$ for $i\ne j$, 
 for adjacent $i,j$ both sides of the identity are
negative, forcing $\varepsilon_i\varepsilon_j=+1$. Hence $\varepsilon$ is constant on every component
$I_c$ of $I$, and by the previous paragraph $\varepsilon\equiv+1$ on every component containing a
cutting wall.

Suppose then that $I_c$ is a component containing no cutting wall
(which we call a \emph{cut-free} component) on which $\varepsilon\equiv-1$.
Since $\pi$ matches cutting walls, $\pi(I_c)$ is also cut-free.
Consider the affine involution $\iota_c$ of $\mu'+U'$ defined by
$$
a'_i\longmapsto m'_i-a'_i \quad (i\in\pi(I_c)),
\qquad
a'_i\longmapsto a'_i \quad (i\in I'\smallsetminus\pi(I_c)).
$$
Its linear part is $-\mathrm{id}$ on $U'_{\pi(I_c)}$ and the identity
on the other component spaces. Since these spaces are mutually
orthogonal, $\iota_c$ is an affine isometry. It also preserves $\Pi'$,
since it preserves each coordinate range $[0,m'_i]$.

To see that $\iota_c$ preserves $P'$, by \eqref{Peqs} it remains to
check the cutting-wall inequalities. Every cutting wall $k$ of $P'$
lies outside $\pi(I_c)$, so orthogonality of the components gives
$\langle\alpha'_i,\alpha'^{\vee}_k\rangle=0$ for $i\in\pi(I_c)$.
Thus its signed margin \eqref{tau}
$$
\tau'_k(v)=\sum_{i\in I'}
\langle\alpha'_i,\alpha'^{\vee}_k\rangle\,a'_i+\hat\mu'_k
$$
does not depend on the coordinates $a'_i$ with $i\in\pi(I_c)$,
which are the only coordinates that $\iota_c$ changes.
Hence $\tau'_k\circ\iota_c=\tau'_k$, and $\iota_c$ restricts to
a self-isometry of $P'$.   
Replacing $\varphi$ by $\iota_c\circ\varphi$ changes the signs
$\varepsilon_i$ from $-1$ to $+1$ on $I_c$ and leaves them unchanged
on the other components.  This preserves $\pi$ and the widths. It also preserves the
matching of cutting walls, since
$\tau'_k\circ\iota_c=\tau'_k$ for every $k\in\mathrm{Cut}'$,
so $\iota_c$ fixes each cutting-wall facet setwise.  Repeating this for every component
on which $\varepsilon\equiv-1$ gives $\varepsilon\equiv+1$ on all
of $I$. The composite $\iota$ of these involutions is the
self-isometry asserted in (2).
At this point $L_{\iota\circ\varphi}(\alpha_i)=\alpha'_{\pi i}$ for every $i\in I$, and since
$L_{\iota\circ\varphi}$ is an isometry the Gram matrices coincide:
$(\alpha'_{\pi i},\alpha'_{\pi j})=(\alpha_i,\alpha_j)$ for all $i,j\in I$. By
Definition~\ref{def:basediso}, $\sigma:\alpha_i\mapsto\alpha'_{\pi i}$ is then a based isometry
$\Phi_I\to\Phi'_{I'}$, which proves (2).

	It only remains to match the depths of the cutting walls. There are easier ways to prove this result, but we will need this geometric calculation in Theorem \ref{prop:KPshape} to produce a polynomial from the polytope. Fix $i\in\mathrm{Cut}$. The wall facet of $P$
lies on the hyperplane $\{\tau_i=0\}=\{\langle v,\alpha_i^\vee\rangle=-1\}$, and 
$\{\tau_i=-\delta_i\}=\{\langle v,\alpha_i^\vee\rangle=-1-\delta_i\}$ is a supporting hyperplane
of $\Pi$, by the definition of $\delta_i$.  Writing $\langle v,\alpha_i^\vee\rangle=(v,\alpha_i)/D_i$,
these hyperplanes are given by $(v,\alpha_i)=-D_i$ and $(v,\alpha_i)=-(1+\delta_i)D_i$. Since $\alpha_i\in U$, the Euclidean distance in $\mu+U$ between
the parallel hyperplanes $(v,\alpha_i)=r$ and $(v,\alpha_i)=r'$
is $|r-r'|/\lVert\alpha_i\rVert$. So the distance between our two
hyperplanes is
\[
\frac{\delta_i\,D_i}{\lVert\alpha_i\rVert}
\;=\;\frac{\delta_i\,D_i}{\sqrt{2D_i}}
\;=\;\delta_i\sqrt{D_i/2}\,.
\]

Since $\widehat\varphi(P)=P'$ and $\widehat\varphi$ carries the $i$-th wall facet $F_i$ of $P$ onto
the $\pi i$-th wall facet of $P'$, it carries $\operatorname{aff}(F_i)=\{\tau_i=0\}$ onto
$\{\tau'_{\pi i}=0\}$, and the closed half-space of $\mu+U$ bounded by $\{\tau_i=0\}$ and
containing $P$ onto its counterpart, that is, $\{\tau_i\ge0\}$ onto $\{\tau'_{\pi i}\ge0\}$.   
Now of the two supporting hyperplanes of $\Pi$ parallel to $\{\tau_i=0\}$, exactly one lies in
the open half-space $\{\tau_i<0\}$, namely $\{\tau_i=-\delta_i\}$: indeed
$\min_{\Pi}\tau_i=-\delta_i<0$, while $\max_{\Pi}\tau_i>0$ because $P\subseteq\{\tau_i\ge0\}$
is full-dimensional. The same holds for $\Pi'$; since $\widehat\varphi(\Pi)=\Pi'$, it follows that
$\widehat\varphi$ carries $\{\tau_i=-\delta_i\}$ onto $\{\tau'_{\pi i}=-\delta'_{\pi i}\}$. As
$\widehat\varphi$ preserves distances, we obtain
\[
\delta_i\sqrt{\frac{D_i}{2}}
=\delta'_{\pi i}\sqrt{\frac{D'_{\pi i}}{2}}.
\]
Since $D'_{\pi i}=\|\alpha'_{\pi i}\|^2/2
=\|\alpha_i\|^2/2=D_i$, it follows that
$\delta'_{\pi i}=\delta_i$, completing (3).
 
\end{proof}

\section{Polytopal invariance and the two dreams}

In this section we first prove the main theorem and then solve the dreams (D2) and (D1) from the introduction.

\begin{proof}[Proof of Theorem~\ref{thm:main}]
	Let $P=P^{\Phi}_{\mu,\lambda}$ and $P'=P^{\Phi'}_{\mu',\lambda'}$ be isometric. The case of empty support was settled at the beginning of
	Section~\ref{sec:parab}.  Assume then that $I\ne\varnothing\ne I'$, and let
	$\varphi:P\to P'$ be an isometry. By Proposition~\ref{prop:recover} there are a bijection
	$\pi:I\to I'$ and a based isometry $\sigma:\alpha_i\mapsto\alpha'_{\pi i}$ with
	\[
	m'_{\pi i}=m_i\quad(i\in I),\qquad \pi(\mathrm{Cut})=\mathrm{Cut}',\qquad
	\delta'_{\pi i}=\delta_i\quad(i\in\mathrm{Cut}).
	\]
	These are exactly the hypotheses of Theorem~\ref{thm:parab}(2), which gives
	$\Kf_{\Phi}(M,\delta)=\Kf_{\Phi'}(M',\delta')$. By Proposition~\ref{prop:KP}, the two sides equal
	$K^{\Phi}_{\lambda\mu}$ and $K^{\Phi'}_{\lambda'\mu'}$, respectively.
\end{proof}

\begin{remark}
	Theorem~\ref{thm:main} remains valid for reducible root systems
	$\Phi$ and $\Phi'$, provided that each irreducible factor is
	normalized so that its long roots have squared length $2$.
	Indeed, the algebraic arguments already allow reducible
	systems. In the geometric reconstruction, the possible squared
	root lengths are still $2$, $1$, and $2/3$, so edge lengths
	still determine root lengths and widths separately.  The identification of the facets does not require a connected
	Dynkin diagram, the sign corrections are already performed
	separately on the connected components of the support, and
	the cutting-wall depths are recovered from distances by the
	same formula.
\end{remark}

\begin{theorem}[Dream 2]\label{cor:class}
Let $P=P^{\Phi}_{\mu,\lambda}$ and $P'=P^{\Phi'}_{\mu',\lambda'}$ be realized polytopes with
nonempty supports $I, I'$. Then $P$ and $P'$ are isometric if and only if there are a bijection
$\pi:I\to I'$ and a based isometry $\sigma:\Phi_I\to\Phi'_{I'}$, $\alpha_i\mapsto\alpha'_{\pi i}$,
such that
\[
m'_{\pi i}=m_i\quad(i\in I),\qquad \pi(\mathrm{Cut})=\mathrm{Cut}',\qquad
\delta'_{\pi i}=\delta_i\quad(i\in\mathrm{Cut}).
\]
The raw shape of a realized polytope is
therefore classified, up to isometry, by the data $(\Phi_I,M,\delta|_{\mathrm{Cut}})$ up to based
isometry.
\end{theorem}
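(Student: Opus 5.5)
The forward direction, isometric implies matching data, is exactly Proposition~\ref{prop:recover}. Given an isometry $\varphi:P\to P'$, parts (2) and (3) of that proposition produce the bijection $\pi$ and the based isometry $\sigma:\alpha_i\mapsto\alpha'_{\pi i}$, possibly after composing $\varphi$ with the self-isometry $\iota$. They also give $m'_{\pi i}=m_i$, $\pi(\mathrm{Cut})=\mathrm{Cut}'$ and $\delta'_{\pi i}=\delta_i$. Nothing further is needed there.

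For the converse I will build the isometry explicitly. Extend $\sigma$ linearly to an isometry $U\to U'$, and define the affine isometry
\[
\widehat\varphi:\mu+U\to\mu'+U',\qquad \widehat\varphi(v)=\mu'+\sigma(v-\mu).
\]
In parallelepiped coordinates this sends $v=\mu+\sum_{i\in I}a_i\alpha_i$ to $\mu'+\sum_{i\in I}a_i\alpha'_{\pi i}$. Thus $a'_{\pi i}(\widehat\varphi(v))=a_i(v)$. Since $m'_{\pi i}=m_i$, the parallelepiped inequalities of \eqref{Peqs} are carried to one another, so $\widehat\varphi(\Pi)=\Pi'$.

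It remains to match the wall inequalities, again via \eqref{Peqs}. Only the walls in $\mathrm{Cut}$ and $\mathrm{Cut}'$ matter, and these correspond under $\pi$. For $k\in\mathrm{Cut}$, formula \eqref{tau} gives
\[
\tau_k(v)=\sum_{i\in I}\langle\alpha_i,\alpha_k^\vee\rangle a_i+\hat\mu_k .
\]
Because $\sigma$ is a based isometry, it preserves Cartan integers: $\langle\alpha'_{\pi i},\alpha'^\vee_{\pi k}\rangle=\langle\alpha_i,\alpha_k^\vee\rangle$. So the linear parts of $\tau_k$ and $\tau'_{\pi k}\circ\widehat\varphi$ agree. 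For the constant terms I need $\hat\mu'_{\pi k}=\hat\mu_k$. By \eqref{deltai} and the coordinate identity $\hat\lambda_k-\hat\mu_k=\langle\beta,\alpha_k^\vee\rangle$,
\[
\hat\mu_k=2m_k-\delta_k-\sum_{i\in I}m_i\langle\alpha_i,\alpha_k^\vee\rangle .
\]
Every term on the right is matched by $\pi$: the widths, the depths (for $k\in\mathrm{Cut}$), and the Cartan integers. Hence $\hat\mu'_{\pi k}=\hat\mu_k$ and $\tau'_{\pi k}\circ\widehat\varphi=\tau_k$. By \eqref{Peqs} it follows that $\widehat\varphi(P)=P'$, and the restriction of $\widehat\varphi$ is the required isometry.

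For the final sentence of the statement, I note that the data $(\Phi_I,M,\delta|_{\mathrm{Cut}})$, taken up to based isometry, is exactly the data matched in the criterion. The criterion's relation is an equivalence relation, since inverses and composites of based isometries are again based isometries. So isometry classes of raw shapes correspond to classes of this data.

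The main obstacle is the constant term $\hat\mu_k$. One must observe that the data determine the wall offsets only for cutting walls, and that this is enough because the non-cutting walls impose no restriction by \eqref{Peqs}. Beyond that, the argument is bookkeeping.
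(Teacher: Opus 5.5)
Your proposal is correct and follows the paper's proof. The forward direction is Proposition~\ref{prop:recover}, and for the converse you use the same affine map $\mu+u\mapsto\mu'+\sigma(u)$, match the parallelepipeds, and show $\tau'_{\pi k}\circ\widehat\varphi=\tau_k$ for every cutting wall $k$. The only minor difference is how the constant term is fixed: you use the explicit identity $\hat\mu_k=2m_k-\delta_k-\sum_{i\in I}m_i\langle\alpha_i,\alpha_k^\vee\rangle$, while the paper compares the minima $-\delta_i$ and $-\delta'_{\pi i}$ of the two affine functions over $\Pi$ and $\Pi'$; both arguments are valid.
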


\begin{proof}

	One direction is Proposition~\ref{prop:recover}. For the converse, we show that
	the affine isometry induced by $\sigma$ matches first the two parallelepipeds
	and then their cutting half-spaces.

Write $\sigma$ also for its linear extension $U\to U'$, and define
	the affine isometry
	$$
	\widehat\varphi:\mu+U\longrightarrow\mu'+U',
	\qquad
	\mu+u\longmapsto\mu'+\sigma(u).
	$$
By the definition of $\widehat\varphi$, we have
$$
\widehat\varphi(\Pi)
=\mu'+\sum_{i\in I}[0,m_i]\sigma(\alpha_i)
=\mu'+\sum_{i\in I}[0,m'_{\pi i}]\alpha'_{\pi i}
=\Pi'.
$$

	It remains to show that $\widehat\varphi$ matches the cutting half-spaces.
Fix $i\in\mathrm{Cut}$. We will first prove that 
	\[\tau'_{\pi i}\circ\widehat\varphi=\tau_i: \mu+U\rightarrow \mathbb{R}.\] First observe that these
	two affine functions have the same linear part. Indeed, for
	$u\in U$, the linear part of $\tau'_{\pi i}\circ\widehat\varphi$ sends
	$u$ to
	\[
	\frac{(\sigma(u),\alpha'_{\pi i})}{D'_{\pi i}}
	=
	\frac{(\sigma(u),\sigma(\alpha_i))}{D_i}
	=
	\frac{(u,\alpha_i)}{D_i},
	\]
	where we used $D'_{\pi i}=D_i$ and the fact that $\sigma$ is an isometry.
	By \eqref{sigmai}, this is precisely the linear part of $\tau_i$.
	Therefore the two functions differ by a constant.  	
	
	To determine this constant, we compare their minima on $\Pi$. Since
	$\widehat\varphi(\Pi)=\Pi'$ and $\delta'_{\pi i}=\delta_i$, we have
	$\min_{\Pi}(\tau'_{\pi i}\circ\widehat\varphi)
	=\min_{\Pi'}\tau'_{\pi i}
	=-\delta'_{\pi i}
	=-\delta_i
	=\min_{\Pi}\tau_i$.
	Thus the constant is zero, proving that
	$\tau'_{\pi i}\circ\widehat\varphi=\tau_i$. 	
	
	Thus a point $v\in\Pi$ satisfies $\tau_i(v)\ge 0$ if and only if
	its image $\widehat\varphi(v)$ satisfies
	$\tau'_{\pi i}(\widehat\varphi(v))\ge 0$.
	Since $\pi$ maps $\mathrm{Cut}$ bijectively onto $\mathrm{Cut}'$,
	$v$ satisfies all the cutting-wall inequalities defining $P$
	if and only if $\widehat\varphi(v)$ satisfies those defining $P'$.
	Together with $\widehat\varphi(\Pi)=\Pi'$, this gives
	$\widehat\varphi(P)=P'$ by \eqref{Peqs}.

\end{proof}

	\begin{remark}\label{rem:isometries}
			Suppose that $\Phi=\Phi'$ and both supports are full.
		After composing with the cut-free component reversals of Proposition \ref{prop:recover}, part~\ref{perm},
		the linear part sends $\alpha_i$ to $\alpha_{\pi i}$ for a Dynkin
		diagram automorphism $\pi$. The resulting affine isometry sends
		$\Pi$ onto $\Pi'$ and therefore $\mu$ to $\mu'$: these are the unique
		vertices whose outgoing edges are positive multiples of the simple roots.
		Thus the corrected isometry has the form
		$$
		\mu+\sum_i a_i\alpha_i
		\longmapsto
		\mu'+\sum_i a_i\alpha_{\pi i}.
		$$
		
		In particular, if $\Phi$ is irreducible and its Dynkin diagram has
		no nontrivial automorphism, as in types $A_1$, $B_r$, $C_r$, $F_4$,
		$G_2$, $E_7$ and $E_8$, the corrected isometry is the translation
		$v\mapsto v+\mu'-\mu$.
		If moreover some wall cuts $\Pi$, no component reversal is available, so this
		translation is the only isometry $P\to P'$.
		If no wall cuts $\Pi$, there is exactly one additional isometry,
		obtained by composing the translation with the central inversion
		$v\mapsto\mu+\lambda-v$.
		
		Nontrivial diagram automorphisms give further possibilities.
		For example, in type $A_2$, the pairs
		$(\mu,\lambda)=(0,3\varpi_1)$ and
		$(\mu',\lambda')=(0,3\varpi_2)$ give pentagons exchanged by the
		reflection $\alpha_1\leftrightarrow\alpha_2$.
	
	\end{remark}

We now construct $K_P$ directly from the metric space $P$.
Choose an isometric embedding of $P$ into a Euclidean space and
identify $P$ with its image.  All constructions below take place in the affine hull of the
	chosen image of $P$. Any two isometric embeddings of $P$ are
	related by an affine isometry of their affine hulls, which
	transports each step of the construction to the corresponding
	step in the other embedding,
so the choice of embedding does not affect the result. 
Call a facet \emph{paired} if its outer normal has an antipodal partner
among the facet normals of $P$, and \emph{unpaired} otherwise.

\begin{enumerate}
	\item Let $\Pi_P$ be the intersection of the closed supporting
	half-spaces of the paired facets, each chosen to contain $P$.
	
	\item Choose a vertex $v$ of $\Pi_P$ whose outgoing edge vectors
	$e_1,\dots,e_d$, where $d:=\dim P$, satisfy $(e_i,e_j)\le0$ for $i\ne j$, and such that
	every unpaired outer normal is a negative multiple of some $e_i$.
	
	\item Write
	$$
	\lVert e_i\rVert^2=m_i^2c_i,
	\qquad m_i\in\Z_{>0},\quad c_i\in\{2,1,\tfrac23\},
	$$
	and set $\tilde\alpha_i:=e_i/m_i$ and $M:=(m_1,\dots,m_d)$.
	
	\item If an unpaired facet $F$ has outer normal a positive multiple
	of $-\tilde\alpha_i$, choose a point $x\in\Pi_P$ minimizing
	$(x,\tilde\alpha_i)$ and set
	$$
	\delta_i^P\coloneqq
	\frac{2\,\operatorname{dist}(x,\operatorname{aff}F)}
	{\lVert\tilde\alpha_i\rVert}.
	$$
	Otherwise, set $\delta_i^P:=-\infty$.
\end{enumerate}
The theorem below shows that the construction is well defined:
a vertex satisfying the conditions in step~(2) exists, the
decomposition in step~(3) exists and is unique, and the depth
in step~(4) is independent of the minimizing point chosen.
Moreover, the output is independent of the vertex chosen in
step~(2), the ordering of the edges, and the isometric embedding,
up to a based isometry matching the widths and reconstructed
depths. The theorem also shows that the roots $\tilde\alpha_i$
form a base of a root system $\Psi_P$. Using the reconstructed data \(M\) and \(\delta^P\),
we define
\[
K_P(q):=\Kf_{\Psi_P}(M,\delta^P)(q),
\]
where $\Kf_{\Psi_P}(M,\delta^P)$ is evaluated through \eqref{eq:KP} and \eqref{eq:vw}, with $J^P=\{i:\delta_i^P\ge m_i\}$ in place of $J$: for $w\in W_{J^P}$ and any reduced expression $s_{j_1}\cdots s_{j_r}$ of $w$, we put $\tilde v_w:=\tilde\beta-\sum_{k}(2m_{j_k}-\delta^P_{j_k})\,u_k(\tilde\alpha_{j_k})$, where $\tilde\beta=\sum_i m_i\tilde\alpha_i$ and $\Psi_P$ is the root system with base $\{\tilde\alpha_i\}$; independence of the reduced expression is part of Theorem~\ref{prop:KPshape}.
Indices with \(\delta_i^P=-\infty\) are excluded from \(J^P\),
so these entries do not enter the formula. If $P$ is a single point, we set $K_P:=1$.

\begin{theorem}[Dream 1]\label{prop:KPshape}
	For every realized polytope $P=P^\Phi_{\mu,\lambda}$, the construction above is well defined. Its output
	$(\Psi_P,M,$$\delta^P$$)$ is independent of the choices up to a based
	isometry matching $M$ and $\delta^P$, and
	$$
	K_P(q)=K^\Phi_{\lambda\mu}(q).
	$$
\end{theorem}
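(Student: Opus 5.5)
The plan is to fix a realization $P=P^\Phi_{\mu,\lambda}$ with $I\ne\varnothing$ and show that every admissible output of the construction is based-isometric to $(\Phi_I,M|_I,\delta|_{\mathrm{Cut}})$, with $-\infty$ outside $\mathrm{Cut}$. The equality $K_P=K^\Phi_{\lambda\mu}$ then follows from Theorem~\ref{thm:parab}(2) and Proposition~\ref{prop:KP}. Independence of the isometric embedding is formal. Any two embeddings differ by an affine isometry of affine hulls, and this isometry carries normals, $\Pi_P$, vertices, edges and distances to their counterparts. So I will work in the standard embedding $P\subseteq\mu+U$.

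\emph{Step 1: $\Pi_P=\Pi$ and the normals.} By Lemmas~\ref{lem:inventory} and~\ref{lem:dichotomy}, the paired facets are exactly the parallelepiped facets. Hence $\Pi_P=\Pi$, exactly as in Proposition~\ref{prop:recover}(1). The unpaired outer normals are $-\alpha_i$ for $i\in\mathrm{Cut}$. The vertex $\mu$ satisfies step~(2), since its outgoing edges are $m_i\alpha_i$ with $(\alpha_i,\alpha_j)\le0$.

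\emph{Step 2: every admissible vertex behaves like $\mu$.} This is the main obstacle. Let $v$ be a vertex of $\Pi$. Its outgoing edges are $\epsilon_i m_i\alpha_i$ with signs $\epsilon_i\in\{\pm1\}$. If $i,j$ are adjacent in $I$, the condition $(e_i,e_j)\le0$ forces $\epsilon_i=\epsilon_j$, so $\epsilon$ is constant on each component $I_c$. Suppose $i\in\mathrm{Cut}$. Its normal $-\alpha_i$ must be a negative multiple of some $e_j$, and $e_j$ is parallel to $\alpha_j$, so $j=i$ and $\epsilon_i=+1$. Therefore $\epsilon\equiv+1$ on every component meeting $\mathrm{Cut}$. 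On cut-free components the sign may be $-1$; this is the reversal $\iota$ of Proposition~\ref{prop:recover}(2). That reversal is a self-isometry of $P$ fixing the wall facets, and it carries $\mu$ to $v$. So any admissible choice is the image of the standard one under a self-isometry, and the resulting data agree up to based isometry.

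\emph{Step 3: decomposition and depths.} For step~(3), $\|e_i\|^2=m_i^2\|\alpha_i\|^2$. The irrationality argument of Proposition~\ref{prop:recover} shows that the decomposition $m_i^2c_i$ with $c_i\in\{2,1,\tfrac23\}$ is unique. Hence $\tilde\alpha_i=\pm\alpha_i$, with sign $+$ in the standard choice. Thus $\{\tilde\alpha_i\}$ is a base of $\Psi_P\cong\Phi_I$, and Lemma~\ref{lem:basediso} transports everything. For step~(4), take $i\in\mathrm{Cut}$. The minimizers of $(x,\alpha_i)$ on $\Pi$ lie on the supporting hyperplane $\{\tau_i=-\delta_i\}$. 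This hyperplane is parallel to $\operatorname{aff}F=\{\tau_i=0\}$, so the distance is independent of the minimizer. By the computation at the end of Proposition~\ref{prop:recover} this distance equals $\delta_i\sqrt{D_i/2}=\delta_i\|\alpha_i\|/2$, so $\delta_i^P=\delta_i$. For $i\notin\mathrm{Cut}$ we set $\delta_i^P=-\infty$ and $i\notin J$, consistent with $J\subseteq\mathrm{Cut}$.

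\emph{Step 4: conclusion.} The output is $(\Phi_I,M,\delta)$ with $\delta|_{\mathrm{Cut}}$ exact. Then $J^P=J$, and $2m_j-\delta^P_j=\hat\lambda_j$ for $j\in J$. Consequently $\tilde v_w=v_w$ by \eqref{eq:vw}, which also shows that $\tilde v_w$ does not depend on the reduced expression. Finally, $K_P=\Kf_{\Phi_I}(M,\delta)=K^{\Phi_I}_{\lambda_I\mu_I}=K^\Phi_{\lambda\mu}$ by Theorem~\ref{thm:parab}(1) and Proposition~\ref{prop:KP}. Any other choices change the data by a based isometry matching $M$ and $\delta^P$, so Theorem~\ref{thm:parab}(2) gives the same polynomial.
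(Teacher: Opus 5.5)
Your proposal is correct and follows essentially the paper's route: recover $\Pi_P=\Pi$ from the paired facets, show that the admissible vertices have signs constant on components and equal to $+1$ on components meeting $\mathrm{Cut}$, recover $m_i$, $\tilde\alpha_i=\pm\alpha_i$ and $\delta^P_i=\delta_i$, and then compare with the restricted pair via Theorem~\ref{thm:parab}(1) and Proposition~\ref{prop:KP}. The only difference is cosmetic: you handle non-standard vertices by transporting along the reversal self-isometry $\iota$, whose linear part is the paper's $\sigma_\varepsilon$, and cite Theorem~\ref{thm:parab}(2); since that theorem is stated for realized polytopes, you are really using its proof (i.e.\ $\tilde v_w=\sigma_\varepsilon(v_w)$ together with Lemma~\ref{lem:basediso}), which is exactly what the paper does.
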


\begin{proof}
	If $P$ is a point, then $\mu=\lambda$ and $K_P=1=K^\Phi_{\lambda\mu}$; assume from now on that $\dim P>0$.
	We show that the construction recovers the original widths and
	cutting-wall depths. The only ambiguity is the choice of vertex
	in step~(2): on each connected component with no cutting wall,
	there are two possible choices, which reverse all the simple
	roots of that component simultaneously.
	To distinguish the reconstructed depths from the original ones, we write
	$\delta^P$ for the vector produced in step~(4), and retain $\delta$
	for the depths of the realization. 
	
	A change of isometric embedding transports every step of the
	construction. We may therefore work in the embedding
	$P\subseteq\mu+U$ supplied by the realization.
	By Lemmas~\ref{lem:dichotomy} and~\ref{lem:inventory}, the paired
	facets are exactly the parallelepiped facets, while the unpaired
	facets are the wall facets
	$P\cap\{\tau_i=0\}$, $i\in\mathrm{Cut}$, with outer normals
	$-\alpha_i$. Hence step~(1) recovers
	\[
	\Pi_P=\Pi.
	\]

	We now describe all the choices allowed in step~(2).
	The vertices of $\Pi$ are
	\[
	v_S\coloneqq\mu+\sum_{i\in S}m_i\alpha_i,
	\qquad \mathrm{with}\ S\subseteq I.
	\]

For each \(i\in I\), let \(e_i^S\) be the vector corresponding to
the edge of \(\Pi\) containing \(v_S\) and parallel to \(\alpha_i\),
directed away from \(v_S\). Then
\[
e_i^S=\varepsilon_i m_i\alpha_i,
\qquad
\varepsilon_i:=
\begin{cases}
	-1,& i\in S,\\
	+1,& i\notin S.
\end{cases}
\]
For distinct \(i,j\in I\), we have
\[
(e_i^S,e_j^S)
=\varepsilon_i\varepsilon_j m_i m_j(\alpha_i,\alpha_j).
\]

	The inner product $(\alpha_i,\alpha_j)$ is negative when $i,j$ are adjacent
	and zero otherwise. Thus the angle condition in step~(2) is
	equivalent to requiring that $\varepsilon_i$ be constant on each
	connected component of $I$. 
For \(j\in\mathrm{Cut}\), the corresponding unpaired outer normal is
\(-\alpha_j\). Since no other simple root is parallel to \(\alpha_j\),
this normal is a negative multiple of an outgoing edge vector precisely
when \(e_j^S=m_j\alpha_j\), that is, when \(\varepsilon_j=+1\).  Consequently, the vertices admissible in step~(2) are precisely
those whose sign assignments are constant on each connected component
and equal to \(+1\) on every component containing a cutting wall.
	In particular, $v=\mu$ is admissible. 
	
		We now compare the data recovered from an admissible vertex with
	those of the original realization. Since \(e_i^S=\varepsilon_i m_i\alpha_i\), we have
	\[
	\|e_i^S\|^2=m_i^2\|\alpha_i\|^2,
	\qquad
	\|\alpha_i\|^2\in\{2,1,2/3\}.
	\]

	The ratio of two distinct elements of \(\{2,1,2/3\}\) is never
	the square of a rational number. Then this decomposition is unique:
	step~(3) recovers \(m_i\) and gives
	\(\widetilde\alpha_i=\varepsilon_i\alpha_i\).

	To describe this simultaneous change of signs, define
	\[
	\sigma_\varepsilon:U\longrightarrow U,
	\qquad
	\sigma_\varepsilon(\alpha_i)=\varepsilon_i\alpha_i.
	\]
	Since the signs are constant on each connected component,
	\(\sigma_\varepsilon\) acts as either \(\mathrm{id}\) or
	\(-\mathrm{id}\) on each component subspace $U_c$. These subspaces are
	mutually orthogonal, so \(\sigma_\varepsilon\) is a linear isometry.
	Moreover, it fixes \(\alpha_i\) for every \(i\in\mathrm{Cut}\). 
Thus the roots recovered in step~(3) form the base
\(\{\widetilde\alpha_i\}=\sigma_\varepsilon(\{\alpha_i\})\)
of \(\Psi_P:=\sigma_\varepsilon(\Phi_I)\), and
\(\sigma_\varepsilon\) is a based isometry from \(\Phi_I\)
to \(\Psi_P\). In fact, \(\Psi_P=\Phi_I\) as a set of roots, since
\(\sigma_\varepsilon\) acts as \(\pm\mathrm{id}\) on each
component, what can change is the chosen base.

We now compute the depths \(\delta_i^P\) defined in step~(4).
An unpaired facet \(F\) with outer normal a positive multiple
of \(-\tilde\alpha_i\) exists exactly when \(i\in\mathrm{Cut}\).
In that case, \(\tilde\alpha_i=\alpha_i\) and
\(\operatorname{aff}F=\{\tau_i=0\}\).
Every point \(x\in\Pi\) minimizing \((x,\alpha_i)\) lies on
\(\{\tau_i=-\delta_i\}\).  By the depth calculation in
Proposition~\ref{prop:recover}, the distance between these
parallel hyperplanes is \(\delta_i\|\alpha_i\|/2\).
Thus the value in step~(4) is independent of the minimizing
point \(x\), and
\[
\delta_i^P=
\begin{cases}
	\delta_i,& i\in\mathrm{Cut},\\
	-\infty,& i\notin\mathrm{Cut}.
\end{cases}
\]

We have now verified that every step of the construction is
well defined: step~(1) recovers $\Pi$, we have classified all the admissible choices of vertices in step~(2), and in particular  the vertex $\mu$ is admissible, for any admissible vertex the decomposition in step~(3) exists
and is unique, and step~(4) is independent of the minimizing
point chosen. We have also shown that the recovered roots form
a base of $\Psi_P$.

It remains to check that the choices made in the construction
do not affect the resulting data, up to a based isometry.
For two vertices satisfying step~(2), let $\varepsilon$ and
$\varepsilon'$ be their sign assignments. The corresponding
widths and reconstructed depths are the same, and the based
isometry
\[
\sigma_{\varepsilon'}\sigma_\varepsilon^{-1}
\]
sends $\varepsilon_i\alpha_i$ to $\varepsilon_i'\alpha_i$
for every $i\in I$. Thus it matches the two bases together
with their widths and reconstructed depths. Reordering the
edges merely relabels these data, while changing the isometric
embedding transports every step by an affine isometry.
This proves the asserted independence of choices.

We now show that the recovered data give the required
polynomial. By Theorem~\ref{thm:parab}(1), we only need to
compare the formula defining $K_P$ with the formula for
the restricted pair $(\mu_I,\lambda_I)$ in $\Phi_I$.
The latter has widths $M$ and depths $\delta|_I$, as shown
in the proof of that theorem. There are therefore only two
differences to account for: the replacement of $\delta|_I$
by $\delta^P$, and the change of base from $\{\alpha_i\}$
to $\{\tilde\alpha_i\}$.

First, the replacement of the depths discards only entries
that do not contribute to the formula. Indeed, the depths
agree on $\mathrm{Cut}$, and
$J\subseteq\mathrm{Cut}$. Outside $\mathrm{Cut}$, the original
depths do not contribute to the active set, and neither do
the reconstructed values $-\infty$.  Thus, 
\[
J^P=\{i\in I:\delta_i^P\ge m_i\}
=
\{i\in I:\delta_i\ge m_i\}
=J.
\]
Moreover, for every $j\in J$,
\[
2m_j-\delta_j^P=2m_j-\delta_j=\hat\lambda_j.
\]
Thus the active set and all coefficients used
in~\eqref{eq:vw} are unchanged.

Second, changing the base does not change the simple
reflections, since
\[
s_{\tilde\alpha_i}=s_{\varepsilon_i\alpha_i}=s_{\alpha_i}.
\]
Thus both formulas are indexed by the same subgroup $W_J$,
with the same length function. What may change is the set
of positive roots, and hence the partition function.

To compare the vectors entering these partition functions,
note that $\sigma_\varepsilon$ commutes with every simple
reflection and sends
\[
\beta=\sum_{i\in I}m_i\alpha_i
\quad\text{to}\quad
\tilde\beta=\sum_{i\in I}m_i\tilde\alpha_i.
\]
Together with the equality of coefficients proved above,
formula~\eqref{eq:vw} therefore gives, for every $w\in W_J$, 
\[
\tilde v_w=\sigma_\varepsilon(v_w),
\]
where $\tilde v_w$ is computed from the reconstructed data
and $v_w$ from the restricted pair. This also proves that
$\tilde v_w$ is independent of the reduced expression used
to compute it, since $v_w$ is.

Finally, $\sigma_\varepsilon$ carries the positive roots
for the original base to those for the reconstructed base.
By Lemma~\ref{lem:basediso},
\[
\mathcal P_q^{\Psi_P}(\tilde v_w)
=
\mathcal P_q^{\Phi_I}(v_w).
\]
Since the indexing set and the signs $(-1)^{\ell(w)}$ are
also unchanged, the two sums agree term by term.

This comparison proves that the formula defining $K_P$
is well defined and yields
\[
K_P(q)
=
\mathcal K_{\Phi_I}(M|_I,\delta|_I)(q)
=
K^{\Phi_I}_{\lambda_I\mu_I}(q)
=
K^\Phi_{\lambda\mu}(q),
\]
where the last two equalities follow from
Proposition~\ref{prop:KP} and Theorem~\ref{thm:parab}(1).
In particular, every choice in the construction yields
the same polynomial, as required.

\end{proof}

\section{A conjecture and open problems}\label{sec:hecke}

 In this section we speculate about new lines of research opened by this work.

\subsection{Isometry versus interval isomorphism}
Let $\Phi$ be an irreducible root system. For dominant coweights $\mu\le\lambda$ of $\Phi$ we consider the polytope
$P^{\Phi^\vee}_{\mu,\lambda}=(\mu+C)\cap(\lambda-C)\cap(\overline{D}-\rho^\vee)$, that is, the definition of Section~\ref{sec:intro} applied to $\Phi^\vee$, normalized so that long coroots have length $\sqrt2$. We use the conventions of \cite{BOAT}: the fundamental alcove is the antidominant one, $$A_{\mathrm{id}}=\{v:\,-1<( v,\alpha)<0 \text{ for all } \alpha\in\Phi^+\}.$$
  Each $x\in W_{\mathrm{aff}}$ corresponds to the alcove $A_x=x(A_{\mathrm{id}})$, and $\theta(\mu)\in W_{\mathrm{aff}}$ is defined by $A_{\theta(\mu)}=A_{w_0}+\mu$. By the Lusztig--Kato theorem, $K^{\Phi^\vee}_{\lambda\mu}$ is, after a change of normalization, the Kazhdan--Lusztig polynomial $h_{\theta(\mu),\theta(\lambda)}$.

\begin{conjecture}\label{conj:intervals}
Let $\Phi,\Phi'$ be irreducible root systems, $\mu\le\lambda$ dominant coweights of $\Phi$ and $\mu'\le\lambda'$ dominant coweights of $\Phi'$. If $P^{\Phi^\vee}_{\mu,\lambda}$ and $P^{\Phi'^\vee}_{\mu',\lambda'}$ are isometric, then the Bruhat intervals $[\theta(\mu),\theta(\lambda)]\subseteq W_{\mathrm{aff}}(\Phi)$ and $[\theta'(\mu'),\theta'(\lambda')]\subseteq W_{\mathrm{aff}}(\Phi')$ are isomorphic as posets.
\end{conjecture}

Together with the combinatorial invariance conjecture (a proof of which has recently been announced in \cite{OpenAICIC}), Conjecture~\ref{conj:intervals} would imply Theorem~\ref{thm:main}. We tested it in about $64\,000$ comparisons within one type, in affine types of rank at most $6$, and in more than $9\,000$ comparisons between different Dynkin types, with intervals of up to $263\,448$ elements; in each case we exhibited an explicit poset isomorphism.

The converse fails. If $\lambda-\mu=\alpha_i^\vee$ and no wall cuts the polytope, then $P^{\Phi^\vee}_{\mu,\lambda}$ is a segment of length $\lVert\alpha_i^\vee\rVert$, while $[\theta(\mu),\theta(\lambda)]$ has length two and is therefore the diamond with rank sizes $1,2,1$. For instance, in type $B_2$ with $\mu=2\varpi^\vee_1+2\varpi^\vee_2$, the pairs $(\mu,\mu+\alpha_1^\vee)$ and $(\mu,\mu+\alpha_2^\vee)$ give segments of lengths $1$ and $\sqrt2$ and isomorphic intervals. Thus the metric shape of the polytope contains information that the interval does not see.

\subsection{Towards a positive formula}

Lusztig's formula \eqref{eq:lusztig} is an alternating sum, and for general $\Phi$
no uniform positive combinatorial formula for $K^\Phi_{\lambda\mu}$ is known.
In type $A$, the classical charge statistic of Lascoux and Sch\"utzenberger
\cite{LS} gives such a formula; more recently, Plaza and Sagurie obtained a
different combinatorial algorithm  \cite{PlazaSagurie} through the pre-canonical bases introduced in
\cite{LPP}. Beyond type $A$, Patimo and Torres constructed
 a charge statistic in type $C_2$, giving a positive
combinatorial formula for the corresponding Kostka--Foulkes polynomials
\cite{PatimoTorres}.  More
generally, in type $C$, combinatorial formulas have been obtained from energy
functions on Kirillov--Reshetikhin crystals \cite{CKL}. In type $B_2$, Batistelli, Bingham, and Plaza \cite{BBP},  and in type $G_2$, Muniz, Plaza and Rojas-And\'ias \cite{MPR} found combinatorial formulas for generalized Kostka--Foulkes polynomials.

The results of this paper suggest where a uniform positive formula should live.
Proposition~\ref{prop:KP} and Theorem~\ref{thm:parab} express
$K^{\Phi}_{\lambda\mu}$ entirely in terms of the data $(\Phi_I,M,\delta|_{\mathrm{Cut}})$, while
Theorem~\ref{cor:class} shows that this data, up to based isometry, is determined by the isometry
class of $P^{\Phi}_{\mu,\lambda}$. Thus a positive formula built directly from $(\Phi_I,M,\delta|_{\mathrm{Cut}})$ would
make Theorem~\ref{thm:main} automatic: the objects being counted would depend only on the shape,
and not on the realization that produced it.

One possible route is already suggested by Lusztig's formula itself. In each degree, its terms
count lattice points in polytopes, with the alternating signs separating them into positive and
negative families. It is therefore natural to ask whether the cancellation can be made explicit,
by matching the negative lattice points with positive ones and leaving a set of survivors whose
cardinality is $[q^k]K^{\Phi}_{\lambda\mu}$. We have evidence for such a picture in low rank.

\subsection{Towards a geometric explanation}
 A more basic question remains. The polytope $P^\Phi_{\mu,\lambda}$ is an elementary convex body, defined with no reference to sheaves or filtrations, yet its isometry class determines a polynomial encoding local intersection cohomology on the affine Grassmannian
 \cite[\S11]{Lusztig}\cite[\S\S2.4--2.6]{BravermanFinkelberg}.
We do not know a geometric explanation for this phenomenon. Understanding why the Euclidean geometry of the polytope remembers this intersection cohomology
is an open problem.

\end{document}